\documentclass[12pt]{article}
\usepackage{amssymb}
\usepackage{graphicx}

\parindent 0pt
\parskip 6pt

\setlength{\textwidth}{16cm}
\setlength{\textheight}{24cm}

\voffset -2.5cm
\hoffset -1.5cm
\newtheorem{lemma}{Lemma}
\newtheorem{assumption}{Assumption}
\newtheorem{theorem}{Theorem}
\newtheorem{remark}{Remark}
\newenvironment{proof}{{\em Proof:\/}}{}

\newcommand\1{{\bf 1}}

\bibliographystyle{plain}

\begin{document}

\title{First hyperbolic times for intermittent maps with unbounded derivative\thanks{The work of CB is supported by an NSERC grant}
\thanks{{\sc Keywords:\/} Polynomial decay of correlations -- Non-uniformly hyperbolic dynamical system}}
\author{Christopher Bose\thanks{Department of Mathematics and Statistics, University of Victoria, PO Box 3060 STN CSC,
Victoria BC Canada V8W3R4} \and Rua Murray\thanks{Department of Mathematics and Statistics, University of
Canterbury, Private Bag 4800, Christchurch 8140, New Zealand.}}

\maketitle

\begin{abstract}
We establish some statistical properties of the {\em hyperbolic times\/} for
a class of nonuniformly expanding dynamical systems. The maps arise as factors of area
preserving maps of the unit square via a geometric Baker's map type construction,
exhibit intermittent dynamics, and have unbounded derivatives. 
The geometric approach captures various examples from the literature over the 
last thirty years.
The statistics of these maps are controlled by the order of tangency that a certain ``cut
function'' makes with the boundary of the square. Using
a large deviations result of Melbourne and Nicol we obtain sharp estimates
on the distribution of first hyperbolic times. 

As shown by Alves, Viana and others, knowledge of the tail of the distribution of first
hyperbolic times leads to estimates on the rate of decay of correlations and derivation of a CLT.    For our family  of maps, we compare the estimates on correlation decay rate and CLT derived via hyperbolic times with those
derived by a direct Young tower construction. The latter estimates are known to be sharp. 

\end{abstract}

Let $f:X\to X$ be a dynamical system which is expanding on average, but not
necessarily uniformly with every time-step. Amongst the important questions to ask
about $f$ are: is there an invariant SRB-probability measure? how quickly do
correlations between observables decay under iteration by~$f$? does a central
limit theorem hold? are these properties stable to perturbations of $f$?
When $f$ is {\em uniformly expanding\/} the answers to these
questions are well understood (see eg~\cite{BaY,HK}), but the situation for
non-uniformly expanding $f$ is more delicate. Difficulties arise from
the fact that orbits of $f$ may experience periods of local contraction as well
as expansion (for example, quadratic maps~\cite{BY1}), rapidly varying
derivatives near singularities leading to unbounded distortion (eg~\cite{ALP}), or indifferent fixed points~\cite{Y2}.

The theory of \emph{hyperbolic times} has proved useful for analysing the
statistical properties of non-uniformly expanding maps~\cite{A,AA,ABV,ALP,V}.
 The idea was introduced
in~\cite{A2} to  handle specific non-uniformly expanding families
(Alves-Viana maps~\cite{V} and certain quadratic maps~\cite{F}), and has since been
developed for various non-uniformly expanding and partially hyperbolic
classes~\cite{ABV}. Gou\"ezel~\cite{G2} has used hyperbolic times to show that the Alves-Viana maps
exhibit stretched exponential decay of correlations. Alves, Luzzatto and Pinheiro~\cite{ALP}
prove exponential decay of correlations (and a CLT) for a class of
non-uniformly expanding maps by
using hyperbolic times and a Young~tower
construction. Further results are obtained in~\cite{ALP2} for one-dimensional families.
A survey paper discussing many of these ideas is found in~\cite{A}.

Roughly speaking, hyperbolic times are defined as follows\footnote{The exact definition is detailed in equation (\ref{eqn:defHT}) in Section \ref{sec.2}.}: Given an 
orbit $\{f^k x\}$ of a point $x \in X$,  an integer $n >0$ is a \emph{hyperbolic time} for $x$ if for all 
$1 \leq l  \leq n$ the cumulative derivative $\Pi_{k=n-l}^{k=n-1}| Df (f^l (x))| $ grows exponentially in $l$.  In addition, if the map has a nonempty set  $\mathcal S$ of singular points we require 
the distance from $f^{n-l}(x)$ to $\mathcal S$ to contract at exponential rate in 
$l$, essentially an exponential escape condition.  These exponential rates are to be chosen uniformly for $x\in X$. Note that for uniformly expanding maps with bounded distortion both conditions automatically hold and every $n$ is a hyperbolic time.

Therefore, the idea is to choose certain times at which the accumulated expansion and escape from the singular set mimic the uniformly expanding case even though there may have been times along the way where these properties failed.  In this way, many good statistical properties can be recovered.

There are at least two important statistics associated
with hyperbolic times: their long-run frequency of
occurrence, and the distribution of \emph{first hyperbolic times}. Obtaining precise
quantitative control of the distribution of hyperbolic times can be an
important step~\cite{ALP,ALP2} in further analysis of 
statistical properties of the map, including the above-mentioned rates of decay of correlation and
CLT. 

In~\cite{AA} a map~$F$ on the interval $[-1,1]$ is introduced that has positive density of hyperbolic
times, but for which the first hyperbolic time fails to be integrable. $F$ has a number
of special properties (symmetry, preservation of Lebesgue measure, and a pair of indifferent
fixed points with quadratic tangencies). In this paper we present a class of interval
maps~$\mathcal{C}_\alpha$ (parametrised by\footnote{And certain 
continuous functions on $[0,1]$.} $\alpha\in(0,\infty)$) 
which arise as nonuniformly expanding one-dimensional factors of
geometrically derived {\em generalized Baker's transformations\/} (GBTs)~\cite{B1}. Each
map in $\mathcal{C}_\alpha$ has an indifferent fixed point 
(IFP) at $0$, and in fact the map $F$ in \cite{AA} is conjugate to a certain map $f_1\in\mathcal{C}_1$.
Each $f\in\mathcal{C}_{\alpha}$ has a positive long-run frequency of hyperbolic times (by an 
argument from~\cite{ABV}), and integrability of first hyperbolic times holds when $\alpha \in (0,1)$.  However, as $\alpha$
increases through~$1$ this integrability is lost (Lemma~\ref{prop:HtimeLower}). 
In this way, $\alpha=1$
appears as a transition  point for our families $\mathcal{C}_{\alpha}$.

 As becomes clear in our proof of Lemma \ref{prop:HtimeLower}, the 
non-integrability is entirely due to lower bounds on the first hyperbolic time which are determined 
by escape statistics from the neighbourhood  of the 
IFP(s). These same escape statistics are then used to provide upper bounds
on the first hyperbolic times in Theorem \ref{th:HintCLT}, completing the analysis and providing sharp estimate on tail asymptotics for hyperbolic times for the range $0<\alpha < 1$. 
While precise statements are given below, roughly speaking: if
$m$ denotes Lebesgue measure and $h(x)$ denotes the first hyperbolic time on an orbit beginning at~$x$
then $m\{x~:~h(x)\geq n\} \sim n^{-1/\alpha}$.

In \cite{ALP2, ALP} hyperbolic times asymptotics are used to estimate correlation decay rates and to establish CLT's for nonuniformly hyperbolic systems. When applied to our family $\mathcal{C}_{\alpha}$, these results imply upper bounds on correlation decay rates
that fail to be sharp,  compared to the direct computation via Young towers detailed in \cite{BM2010b}.  The range of parameters in our family leading to a CLT is similarly underestimated by the hyperbolic times analysis. Remark \ref{rem:compare} at the end of Section~\ref{sec.1} provides a comparative analysis of these two approaches.


The class $\mathcal{C}_\alpha$ is presented in Section~\ref{sec.1}, 
hyperbolic times are reviewed and lower bounds are derived in Section~\ref{sec.2} and the upper bounds 
are established (via a large deviations principle of Melbourne and Nicol~\cite{MN} on a suitable Young tower~\cite{Y2}) 
in Section~\ref{sec.3}. In Section \ref{sec.concl} we discuss our results in the context of the existing literature on hyperbolic times.  Some technical estimates are placed in 
 an appendix (Section~\ref{sec.4}).

{\em Notation:\/} We write $f(n)=O(g(n))$ to mean there is a constant $C<\infty$ such that
$f(n)\leq C\,g(n)$ and $f(n) \sim g(n)$ to mean $f(n)=O(g(n))$ and $g(n)=O(f(n))$.

\section{Generalized baker's transformations and $\mathcal{C}_{\alpha}$}\label{sec.1}

The generalized baker's construction~\cite{B1} defines a large class of invertible, 
Lebesgue-measure-preserving maps of the unit square $S=[0,1]\times[0,1]$. Specifically,
a two-dimensional map~$B$ on $S$ is determined by a measurable \emph{cut function} $\phi$ on $[0,1]$ satisfying $0 \leq \phi \leq 1$.  The graph
$y=\phi(x)$ partitions the square $S$ into upper and lower pieces and the line 
$\{x=a\}$, where $a=\int_0^1\phi(t)\,dt$, partitions the square into a `left half' $[0,a] \times [0,1]$ and 
a `right half' $[a,1] \times [0,1]$.  The \emph{generalized baker's transformation} (GBT) $B$ maps the left half into the lower piece and the right half into the upper piece in such a way that:

\begin{itemize}
\item Vertical lines in the left (right) half are mapped affinely into vertical `half lines' under (over) the graph of the cut function $\phi$.
\item $B$ preserves two-dimensional Lebesgue measure. 
\item The factor action $f$ of $B$ restricted to vertical lines is (conjugate to) a piecewise monotone increasing, Lebesgue-measure-preserving interval map on $[0,1]$ with two monotonicity pieces $[0,a]$ and 
$[a,1]$. 
\end{itemize}

The action of a typical GBT is presented in Figure ~\ref{fig:gbt}.
 
\begin{figure}
\begin{center}
\includegraphics[width= 0.80\textwidth ]{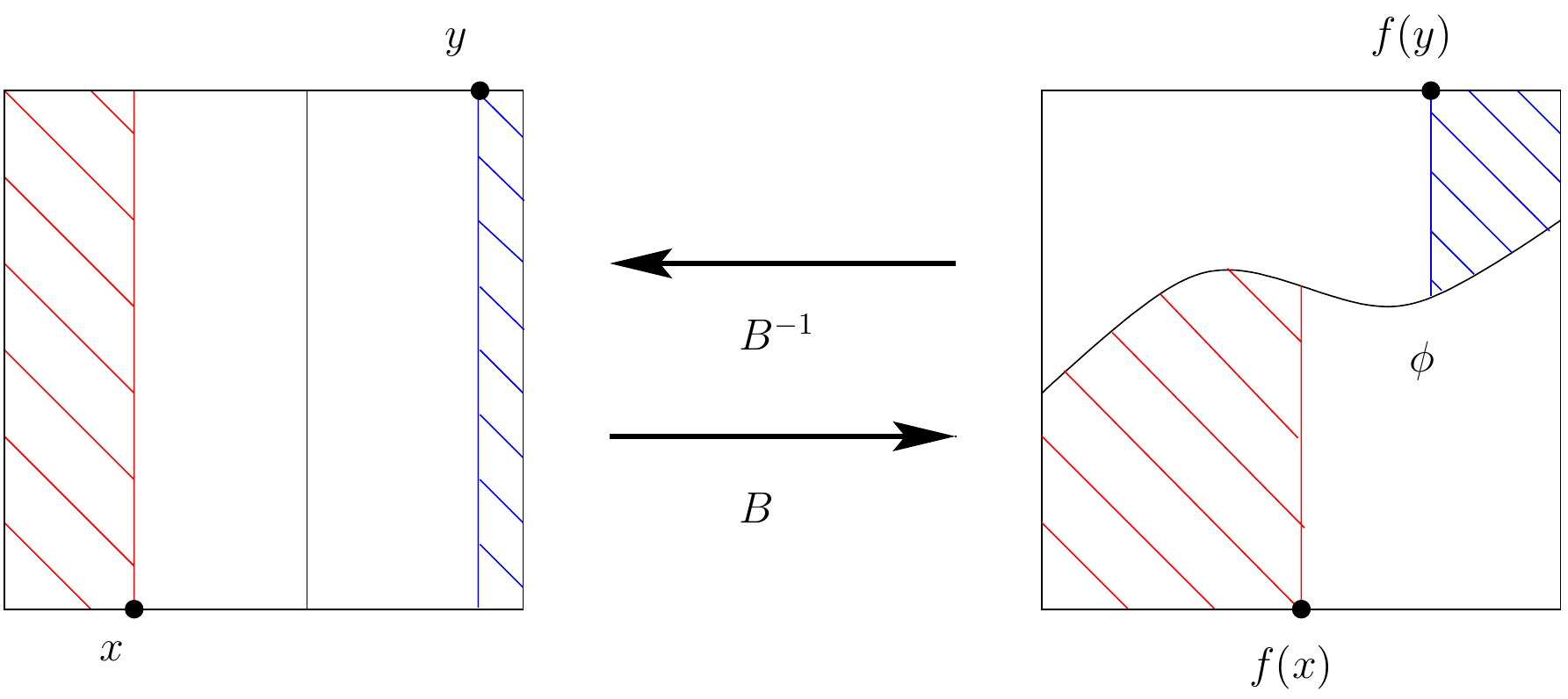}
\caption{The GBT}
\label{fig:gbt}
\end{center}
\end{figure}

When $\phi \equiv 1/2$ the map is the classical baker's transformation where the action 
on vertical lines is an affine contraction and the 
map $f$ is $x \rightarrow 2x\pmod{1}$. On the other hand, 
every measure-preserving transformation $T$ on a
(nonatomic, standard, Borel) probability space with entropy satisfying $0< h(T) < \log 2$ is measurably isomorphic to
some generalized baker's transformation on the square $S$ (see~\cite{B1}). 

In order to proceed, we establish some notation. 
Each GBT~$B$ has a skew-product form $B(x,y)=(f(x),g(x,y))$ where
\begin{equation}\label{eqn.map1}
g(x,y)=\left\{\begin{array}{ll}\phi(f(x))\,y & x\leq a,\\ y+\phi(f(x))\,(1-y)&x>a,\end{array}\right.
\qquad\mbox{and}\qquad \left\{\begin{array}{ll}x=\int_0^{f(x)}\phi(t)\,dt & x\leq a,\\ 1-x=\int_{f(x)}^1(1-\phi(t))\,dt &x>a,\end{array}\right.
\end{equation}
defines $f$ implicitly. 

Furthermore, by construction the vertical lines $\{ x=0\},\, \{x=1\}$ are mapped into themselves by $B$ so that $0,1$ are fixed points of $f$.
If $\phi$ is continuous then $f$ is differentiable on both $(0,a)$ and $(a,1)$ and
\begin{equation}
\label{eqn.mapd}
\frac{df}{dx} = \left\{\begin{array}{ll} \frac{1}{\phi(f(x))}&x<a\\\frac{1}{1-\phi(f(x))}&x>a.\end{array}\right.
\end{equation}
Since $\phi(t)\in[0,1]$ for each $t\in[0,1]$, $\frac{df}{dx}\geq 1$, so $f$ is expanding, each branch of $f$ is increasing, and
may have infinite derivative at preimages of places where $\phi(t)\in\{0,1\}$. We will call $f$ the {\bf expanding factor\/} of $B$.
Note also that if $\phi$ is a {\em decreasing\/} function then $\frac{df}{dx}$ is increasing on $(0,a)$ and decreasing on $(a,1)$.

\begin{lemma}[Properties of GBTs]\label{lem.setup}
Let $f$ be defined by~(\ref{eqn.map1}). Each $x\in [0,1]$  has two 
preimages under $f$: $x_l<a$ and $x_r>a$ and moreover
\begin{enumerate}
\item[(i)] For every $x \in [0,1]$,
\begin{equation}
\label{eqn.map2}
 x_l = \int_0^x\phi(t)\,dt \textnormal{ and } x_r-a =\int_0^x 1 - \phi(t) \, dt = x - x_l;
 \end{equation}
\item[(ii)] $\frac{dx_l}{dx}= \phi$ and  $\frac{dx_r}{dx} = 1- \phi$ Lebesgue almost everywhere;
\item[(iii)]  Lebesgue measure~$m$ is $f$--invariant.
\end{enumerate}
\end{lemma}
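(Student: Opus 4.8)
The plan is to treat $f$ as given by the implicit relations in~(\ref{eqn.map1}), read off its two monotone inverse branches directly from those relations, and then differentiate and integrate the resulting formulas. First I would pin down the branch structure. On $[0,a]$ the relation $x=\int_0^{f(x)}\phi(t)\,dt$ together with $\phi\in[0,1]$ shows that $f$ is increasing (strictly so under the standing hypotheses on $\phi$) with $f(0)=0$ and $f(a)=1$, so $f|_{[0,a]}$ is a bijection onto $[0,1]$; symmetrically, $1-x=\int_{f(x)}^1(1-\phi(t))\,dt$ makes $f|_{[a,1]}$ an increasing bijection onto $[0,1]$. Hence every $x\in[0,1]$ has exactly one preimage $x_l\in[0,a]$ and exactly one $x_r\in[a,1]$, the two coinciding only at the measure-zero point $x=a$.

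For part~(i) I would simply substitute. Since $f(x_l)=x$ with $x_l\le a$, plugging into~(\ref{eqn.map1}) gives $x_l=\int_0^{f(x_l)}\phi(t)\,dt=\int_0^x\phi(t)\,dt$. Since $f(x_r)=x$ with $x_r>a$, the second relation gives $1-x_r=\int_{f(x_r)}^1(1-\phi(t))\,dt=\int_x^1(1-\phi(t))\,dt$, and using $a=\int_0^1\phi(t)\,dt$ this rearranges to $x_r-a=\int_0^x(1-\phi(t))\,dt=x-\int_0^x\phi(t)\,dt=x-x_l$. Part~(ii) is then immediate: $x\mapsto x_l$ and $x\mapsto x_r$ are indefinite integrals of the bounded measurable functions $\phi$ and $1-\phi$, so $\frac{dx_l}{dx}=\phi$ and $\frac{dx_r}{dx}=1-\phi$ almost everywhere by the Lebesgue differentiation theorem (the fundamental theorem of calculus when $\phi$ is continuous).

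For part~(iii) I would use that $f$ is exactly two-to-one and that its two inverse branches tile $[0,1]$. For a Borel set $A\subseteq[0,1]$, $f^{-1}(A)$ is the union of $\{x_l : x\in A\}\subseteq[0,a]$ and $\{x_r : x\in A\}\subseteq[a,1]$, a union that is disjoint up to the single point $a$. Since the branch maps $x\mapsto x_l$ and $x\mapsto x_r$ are monotone and absolutely continuous with a.e.\ derivatives $\phi$ and $1-\phi$, a change-of-variables argument gives $m(\{x_l : x\in A\})=\int_A\phi\,dm$ and $m(\{x_r : x\in A\})=\int_A(1-\phi)\,dm$; adding these, $m(f^{-1}(A))=\int_A(\phi+(1-\phi))\,dm=m(A)$.

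The substitutions in~(i) and the differentiation in~(ii) are routine. The one step needing care, and the main obstacle, is the change-of-variables identity $m(\{x_l : x\in A\})=\int_A\phi\,dm$ in~(iii): it must be established for arbitrary Borel $A$ (it is obvious only for intervals, where it follows from monotonicity of the branch), and one must be slightly careful on the set where $\phi\in\{0,1\}$, since there the branch maps fail to be injective. This is handled by a standard approximation: verify the identity on intervals, extend to finite unions of intervals, and pass to the Borel $\sigma$-algebra by monotone convergence. Alternatively, part~(iii) follows at once from the fact that $f$ is a measure-theoretic factor of the Lebesgue-measure-preserving GBT~$B$.
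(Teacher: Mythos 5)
Your proofs of (i) and (ii) match the paper's: substitute into~(\ref{eqn.map1}) to read off the inverse-branch formulas, then differentiate almost everywhere via Lebesgue's theorem. For~(iii), however, you take a longer route than necessary. You set out to show $m(f^{-1}(A))=m(A)$ for arbitrary Borel $A$ by pushing a change-of-variables identity through the branch maps, and you flag the extension from intervals to general Borel sets (and the non-injectivity of the branches where $\phi\in\{0,1\}$) as the ``main obstacle.'' The paper avoids this entirely: since finite Borel measures on $[0,1]$ that agree on all intervals $[0,x]$ are equal, it suffices to verify invariance on such intervals, and there part~(i) does all the work in one line:
\[
m\bigl(f^{-1}[0,x]\bigr)=m\bigl([0,x_l]\cup[a,x_r]\bigr)=x_l+(x_r-a)=x_l+(x-x_l)=x=m[0,x].
\]
No change of variables, no approximation by simple sets, and no issue with where $\phi$ hits $0$ or $1$, because only the explicit antiderivative formula from~(i) is used, not injectivity of $x\mapsto x_r$. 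Your alternative remark --- that (iii) also follows because $f$ is a factor of the area-preserving GBT $B$ --- is correct and is the conceptual reason the result holds, though the paper opts for the self-contained computation.
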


\begin{proof} See appendix. \qquad$\Box$
\end{proof}

\subsection*{A class $\mathcal{C}_{\alpha}$ of expanding factors of GBTs}

Let $\alpha\in(0,\infty)$. Maps $f\in\mathcal{C}_{\alpha}$ arise as expanding factors of
GBTs whose cut functions $\phi$ satisfy
\begin{itemize}
\item $\phi$ is continuous and decreasing function on $[0,1]$ with $0 \leq \phi \leq 1$. 
\item there is a constant $c_0$ and a $C^1$ function $g_0$ on $(0,1)$ such that near $t=0$
$$\phi(t) = 1-c_0\,t^\alpha + g_0(t)$$
where $\frac{dg_0}{dt}=o(t^{\alpha-1})$;
\item either $\phi(1)>0$ or there are constants $c_1\in(0,\infty)$, $\alpha'\leq \alpha$ and
 a $C^1$ function $g_1$ on $(0,1)$ such that near $t=1$
$$\phi(t) = c_1\,(1-t)^{\alpha'} + g_1(1-t)$$
where $\frac{dg_1}{dt}=o(t^{\alpha'-1})$.
\end{itemize}

It follows from these conditions that $\phi$ is $C^1$ on $(0,1)$
and therefore each $f\in \mathcal{C}_\alpha$ has two piecewise increasing $C^2$ branches
$f_l, f_r$ with respect to the partition into intervals $(0, a)$
and $(a,1)$ (where $a= a(\phi)$).  The branch $f_l$ has continuous extension to $[0,a]$ 
(similarly for $f_r$ and $[a,1]$) and $f_r^\prime(x) \rightarrow \infty$ as $x \rightarrow a^+$.  

Near $x=0$ each $f\in\mathcal{C}_\alpha$ has the formula
$$f(x) = x + \frac{c_0}{1+\alpha}\,x^{1+\alpha} + o(x^{1+\alpha}),$$
giving an {\em indifferent fixed point\/} (IFP) at $0$.
If $\phi(1)=0$ then $f$ also has an IFP at $x=1$, and the order of
tangency of the graph of $f$ near~$1$ is $O((1-t)^{1+\alpha^\prime})$. For maps with IFPs 
at both $0$ and $1$ where the order of tangency is higher at $1$ then $0$, the conjugacy
$\phi(t)\mapsto 1 - \phi(1-t)$ will put the higher order tangency at $0$. There is thus no loss of
generality in assuming that the ``most indifferent'' point is at $x=0$. In case $\phi(1)>0$, 
equation~(\ref{eqn.mapd}) shows that the fixed point at $1$ is hyperbolic.

{\bf Example 1  [Alves-Ara\'ujo map $F$].\/} See \textnormal{\cite{AA}}. Let $\phi(t)=1-t$. Then $\alpha=\alpha'=1=c_0=c_1$ and $g_0=g_1=1$. Then
$$f_1(x):=\left\{\begin{array}{ll}1-\sqrt{1-2\,x}&x<1/2,\\\sqrt{2x-1}&x>1/2.\end{array}\right.$$
In~\cite{R}, Rahe established that the map $B$ is Bernoulli (using techniques from~\cite{FO,dJ-R}).
Moreover, $f_1\in\mathcal{C}_{1}$ is conjugate (by an affine scaling $[0,1]\to[-1,1]$) to a map presented in~\cite{A,AA}
which has non-integrable first hyperbolic time. Despite this, $f_1$ exhibits polynomial decay of
correlations for H\"older observables with rate $O(1/n)$~\cite{BM2010b}.\qquad$\Box$

{\bf Example 2 [Symmetric case].\/} Let $\phi(t)=1-(2t)^\alpha/2$ for $t\in[0,1/2]$ and $\phi(t)=1-\phi(1-t)$ for $t\in[1/2,1]$.
Then $\phi$ is symmetric (and $\alpha=\alpha'$); let the expanding factor of the corresponding GBT be
denoted by $f_\alpha$. Then $f_\alpha$ has indifferent fixed points at $0$ and $1$ with tangency of
order $(1+\alpha)$; moreover $\frac{df}{dx}\to\infty$ as $x\to\frac{1}{2}$.
In \cite{BM2010b} it is shown that H\"older continuous functions
have correlation decay rate $O(n^{-1/\alpha})$ under $f_\alpha$. The paper \cite{C-H-V} obtains similar results for a conjugate
class of maps on $[-1,1]$. The results in this paper, Theorem~\ref{prop:HtimeLower} and  
Theorem~\ref{th:HintCLT}, imply that the {\em first hyperbolic time\/} is integrable if and only if $\alpha<1$,
showing that the map $f_1$ emerges as an interesting transition point in
the class $\mathcal{C}_{\alpha}$.

\begin{remark} 
The class of examples discussed here actually have a rather long history in the mathematical physics literature. For other examples see
\textnormal{\cite{Hem84,GrHo85,Pik91,Zwe98,ArtCri04}}.
\end{remark}

\subsection*{A useful dynamical partition}

To analyse $f\in\mathcal{C}_{\alpha}$ we make a convenient partition of $[0,1]$.
First, observe that since $f^2$ has four $1$--$1$ and onto branches, $f$ admits a period-$2$ orbit
$\{x_0,y_0\}$ where $0<x_0<a<y_0<1$. Next, for each $n>0$ let $x_n=f^{-1}(x_{n-1})\cap(0,a)$
and $x_n^\prime = f^{-1}(x_{n-1})\cap(a,1)$. Then
$$ 0  \cdots < x_{n+1}<x_n<\cdots<x_0 < a\qquad\mbox{and}\qquad
a < \cdots < x_n^\prime < \cdots < x_2^\prime < y_0 < 1.$$
Defining  $y_n=f^{-1}(y_{n-1})\cap(a,1)$ and
$y_n^\prime=f^{-1}(y_{n-1})\cap(0,a)$ allows a similar partitioning of $(x_0,a)$ and $(y_0,1)$
(note that $x_1^\prime=y_0$ and $y_1^\prime=x_0$). Put
$$J_n=(x_{n+1},x_{n}), I_n=(x'_{n+1},x'_{n}), J^\prime_n=(y_{n},y_{n+1}), I^\prime_n=(y^\prime_n,y^\prime_{n+1}).$$
These intervals partition $[0,1]$ from left to right as $$0\cdots J_n\cdots J_0 \, x_0\,  I^\prime_1\cdots I^\prime_n\cdots a\cdots I^\prime_1 \, y_0\, J^\prime_0\cdots J^\prime_n\cdots 1$$
with
$$I_k\stackrel{f}{\to} J_{k-1}\stackrel{f}{\to} \cdots J_0 \stackrel{f}{\to} \left(\cup_{l}I_l\cup I^\prime_l\right)$$
(and similarly for the $J^\prime_\cdot, I^\prime_\cdot$ intervals).

\begin{lemma}
\label{l.meas}
For $f\in\mathcal{C}_{\alpha}$, and the notation established above.
\begin{enumerate}
\item[(i)]  $x_n\sim \left(\frac{1}{n}\right)^{1/\alpha}$;
\item[(ii)] $m(J_k)\sim (\frac{1}{k})^{1+1/\alpha}$;
\item[(iii)] for $x \in I_k$, $\frac{df}{dx}\sim k$;
\item[(iv)] $m(I_k)\sim (\frac{1}{k})^{2+1/\alpha}$;
\item[(v)] for $x \in I_k, ~~\textnormal{dist}(x,a) \sim
\left( \frac{1}{k}\right)^{1 + \frac{1}{\alpha}}$.
\end{enumerate}
When $\phi(1)=0$, similar estimates hold for the $\cdot'$ intervals with $\alpha$ replaced by $\alpha^\prime$.
\end{lemma}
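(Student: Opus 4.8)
The plan is to derive (i) first — the orbit asymptotics at the IFP $0$ — and then read off (ii)--(v) by elementary manipulations using the local form of $\phi$ near $0$ together with Lemma~\ref{lem.setup}. Since the paper's $\sim$ only demands two-sided $O$-bounds, it suffices throughout to work with inequalities valid for all large indices and absorb the finitely many small ones into the implied constants.

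For (i): because $x_n=f^{-1}(x_{n-1})\cap(0,a)$ and $x_n\downarrow 0$, the expansion $f(x)=x+\frac{c_0}{1+\alpha}x^{1+\alpha}+o(x^{1+\alpha})$ gives $x_{n-1}=x_n\bigl(1+\frac{c_0}{1+\alpha}x_n^{\alpha}(1+o(1))\bigr)$. Setting $u_n=x_n^{-\alpha}$ and expanding to first order yields $u_{n-1}=u_n\bigl(1-\frac{\alpha c_0}{1+\alpha}x_n^{\alpha}(1+o(1))\bigr)$, hence $u_n-u_{n-1}=\frac{\alpha c_0}{1+\alpha}(1+o(1))$ (using $u_nx_n^{\alpha}=1$), so a Stolz/Ces\`aro argument gives $u_n\sim n$, i.e. $x_n\sim n^{-1/\alpha}$.

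Now (ii)--(v) follow quickly. For (ii), $m(J_k)=x_k-x_{k+1}=f(x_{k+1})-x_{k+1}\sim x_{k+1}^{1+\alpha}\sim (k^{-1/\alpha})^{1+\alpha}=k^{-(1+1/\alpha)}$. For (iii), on $I_k=(x'_{k+1},x'_k)\subset(a,1)$ equation~(\ref{eqn.mapd}) gives $\frac{df}{dx}=\frac{1}{1-\phi(f(x))}$, and since $f(I_k)=J_{k-1}$ every value of $f$ on $I_k$ is comparable to $x_k\sim k^{-1/\alpha}$; the defining conditions force $1-\phi(t)\sim t^{\alpha}$ as $t\to0^+$ (one checks $g_0(0)=0$ from $\phi(0)=1$ and $\phi\le 1$, so $g_0'=o(t^{\alpha-1})$ integrates to $g_0(t)=o(t^{\alpha})$), whence $\frac{df}{dx}\sim(k^{-1/\alpha})^{-\alpha}=k$ on $I_k$. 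For (iv), the mean value theorem gives $m(J_{k-1})=\int_{I_k}\frac{df}{dx}\,dx\sim k\,m(I_k)$ by (iii), so $m(I_k)\sim k^{-1}m(J_{k-1})\sim k^{-(2+1/\alpha)}$ by (ii). For (v), $\mathrm{dist}(x,a)=x-a$ for $x\in I_k$ lies between $x'_{k+1}-a$ and $x'_k-a$, and by Lemma~\ref{lem.setup}(i), $x'_k-a=\int_0^{x_{k-1}}(1-\phi(t))\,dt\sim x_{k-1}^{1+\alpha}\sim k^{-(1+1/\alpha)}$, and likewise for $x'_{k+1}-a$, so $\mathrm{dist}(x,a)\sim k^{-(1+1/\alpha)}$ on $I_k$. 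When $\phi(1)=0$, writing $u=1-x$, $v=1-f(x)$ in the implicit relation $1-x=\int_{f(x)}^1(1-\phi(t))\,dt$ and using $\phi(1-s)=c_1 s^{\alpha'}(1+o(1))$ gives $u=v-\frac{c_1}{1+\alpha'}v^{1+\alpha'}(1+o(1))$, the exact analogue of the relation at $0$ with $(c_0,\alpha)$ replaced by $(c_1,\alpha')$; running the arguments above verbatim near $x=1$ (with $\frac{df}{dx}=\frac{1}{\phi(f(x))}$ on the left branch, which carries the $I'_\cdot$ intervals, and $\phi(t)\sim(1-t)^{\alpha'}$ near $1$) produces the primed estimates.

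I expect the only genuine work to be in (i): carefully tracking the $o(x^{1+\alpha})$ error through the substitution $u_n=x_n^{-\alpha}$ and justifying the Stolz limit. Parts (ii)--(v) are then immediate, the one point deserving care being that in (iv) we need $\frac{df}{dx}\sim k$ \emph{uniformly} over $I_k$ — which is why it is cleanest to integrate the derivative via the mean value theorem rather than invoke bounded distortion — and this uniformity is free from (iii) precisely because $f(I_k)=J_{k-1}$ consists entirely of points comparable to $k^{-1/\alpha}$.
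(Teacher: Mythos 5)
Your proof is correct, and in fact more self-contained than the paper's: for parts (i)--(iv) the paper simply cites Lemma~1 of \cite{BM2010b}, whereas you derive them from scratch. The key steps you supply are standard and sound: the Stolz/Ces\`aro argument with the substitution $u_n=x_n^{-\alpha}$ for (i); the identity $m(J_k)=f(x_{k+1})-x_{k+1}$ for (ii); the local form $1-\phi(t)\sim t^\alpha$ (using $g_0(0)=0$ so $g_0=o(t^\alpha)$) together with $f(I_k)=J_{k-1}$ for (iii); and the fundamental-theorem-of-calculus identity $m(J_{k-1})=\int_{I_k}\frac{df}{dx}\,dx$ plus the \emph{uniform} comparability of $\frac{df}{dx}$ to $k$ on $I_k$ for (iv) — you rightly flag this uniformity as the point deserving care, and it does hold because $f(I_k)$ lies in the single interval $J_{k-1}$ whose two endpoints are both $\sim k^{-1/\alpha}$.

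For part (v), the one item the paper actually proves (in the appendix), your route differs slightly. The paper telescopes
$x-a=(x-x'_{k+1})+\sum_{j\ge k+1}m(I_j)$
and then invokes the decay $m(I_j)\sim j^{-2-1/\alpha}$ from part (iv), summing the tail. You instead use the explicit preimage formula $x'_k-a=\int_0^{x_{k-1}}(1-\phi(t))\,dt$ from Lemma~\ref{lem.setup}(i) together with the local asymptotics of $1-\phi$ near $0$, which gives $x'_k-a\sim x_{k-1}^{1+\alpha}$ directly from part (i) alone. Both arguments are short and correct; yours has the minor advantage of not routing through (iv), so the logical dependency graph is a little cleaner. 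The handling of the primed intervals by reflecting the implicit relation at $x=1$ is also fine.

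One small remark: the argument in (iii) implicitly uses $\phi(0)=1$ to conclude $g_0(0)=0$. This is not written explicitly in the paper's definition of $\mathcal{C}_\alpha$, but it is forced if $0$ is to be an IFP, and indeed the displayed local form $f(x)=x+\frac{c_0}{1+\alpha}x^{1+\alpha}+o(x^{1+\alpha})$ requires it; you handle this correctly.
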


\begin{proof} For parts~(i)--(iv) see~\cite[Lemma~1]{BM2010b}; for part~(v) see appendix.
\qquad$\Box$
\end{proof}

\begin{remark}
When $f\in\mathcal{C}_\alpha$ and $\phi(1)>0$ the fixed point at~$1$ is hyperbolic. The corresponding decay rates 
for $m(I^\prime_k), m(J^\prime_k)$ are exponential, and (iii) does not hold. Some of the estimates and 
statements below can be modified in this latter case\footnote{In particular, $\{1\}$ is no longer an exceptional point; see the definition in Section \ref{sec.2}.}.
\end{remark}

\begin{assumption}\label{assum:1}
For the remainder of the paper we assume that $\phi(1)=0$ so that $f$ also has an IFP
at $1$ with tangency of order $(1-t)^{1+\alpha^\prime}$ where
$\alpha'\leq\alpha$. Consequently, 
the estimates in parts (i), (ii), (iv) and (v) of Lemma~\ref{l.meas} reveal 
 decay of the sets $I_k,J_k$ which is no faster  than for $I_k^\prime,J_k^\prime$.
\end{assumption}

\begin{remark}\label{rem:compare} Below we  use a Young-tower built under the first-return time function to
the set $\Delta_0:=\cup_{l=1}^\infty I_l\cup I^\prime_l$ to prove upper bounds on the
distribution of first $(\sigma,\delta)$-hyperbolic times $h_{\sigma,\delta}$ for certain~$(\sigma,\delta)$. 
Indeed, for $f\in\mathcal{C}_{\alpha}$ ($\alpha<1$), and any $\alpha''\in(\alpha,1)$,
$m\{h_{\sigma,\delta}>n\}=O(n^{-1/\alpha''})$.
The reason for this distribution is that points in $J_l$
(whose Lebesgue measure $\approx l^{-1-1/\alpha}$) require
approximately~$l$ iterates to achieve enough expansion to be a hyperbolic time. Thus, 
$m\{h_{\sigma,\delta}>n\} = \sum_{l>n}m\{h_{\sigma,\delta}=l\}\lesssim l^{\gamma-1/\alpha}$
for any $\gamma>0$.
Similar bounds are used in~\cite{ALP} to prove decay of correlation results by building a Young~tower~$\Delta^\prime$ whose
tail set decays in the same way as the distribution of $h_{\sigma,\delta}$; the resulting\footnote{And when
$0<\alpha<\frac{1}{2}$, a central limit theorem holds~\cite[Theorem 2]{ALP}.}
 decay of correlations are~$O(n^{1-1/{\alpha''}})$---close to the typical rate for maps
with indifferent fixed points with tangencies of $O(x^{1+\alpha})$. Interestingly,
direct calculations in \cite{BM2010b}  where a first-return tower is built over $\Delta_0$ give decay of correlations for 
H\"older observables with rate $O(n^{-1/\alpha})$ for maps in $\mathcal{C}_\alpha$. 
The same computations give a CLT for the entire range $0<\alpha < 1$.   These improved asymptotics are due to the fact that orbits of $f$ experience very rapid expansion when they pass 
near $a$, giving partial compensation for return from the neighbourhoods of $\{0,1\}$,
something that is not accounted for by the hyperbolic times analysis.
\end{remark}

\section{Hyperbolic times and sets of exceptional points}\label{sec.2}

Non-uniformity of expansion is expressed relative to a
certain (finite) set $\mathcal{S} \subseteq [0,1]$ of exceptional
points. The notation here is precisely as in~\cite{ABV,AA,A}.
Each $f$ is locally $C^2$ on $[0,1]\setminus\mathcal{S}$,
and must satisfy a non-degeneracy of the following type:
there exist constants $B>1, \beta >0$ such that
such that for \emph{every} $x \in [0,1] \setminus \mathcal{S}$ we have
\begin{equation}\label{def:exceptional1}
\frac{1}{B} \textnormal{dist}(x, \mathcal{S})^\beta \leq
\left|\frac{df}{dx}\right| \leq
B \, \textnormal{dist}(x, \mathcal{S})^{-\beta}
\end{equation}
and if $y,z \in [0,1]\setminus\mathcal{S}$ and
$|y-z| \leq \frac{\textnormal{dist}(z,\mathcal{S})}{2}$ then
\begin{equation}\label{def:exceptional2}
\left| \log \left.|\textstyle\frac{df}{dx}|\right|_{x=y}-\left.\log|\textstyle\frac{df}{dx}|\right|_{x=z} \right | \leq
\frac{B}{\textnormal{dist}(z, \mathcal{S})^{\beta}} |y-z|
\end{equation}
$\textnormal{dist}(\cdot, \mathcal{S})$ is used to  denote the
usual Euclidean distance to the set $\mathcal{S}$ (since $f$ is
one-dimensional there is no need to impose a separate Lipschitz
condition on $(df/dx)^{-1}$).

\begin{lemma}\label{lem:nondegS1d} 
Under the conditions of Assumption \ref{assum:1}, for $f\in\mathcal{C}_{\alpha}$
set $\beta=1$.  Then there exists a
$B>1$ so that  $\mathcal{S}=\{0,a,1\}$ satisfies
conditions ~(\ref{def:exceptional1})
and~(\ref{def:exceptional2}).  Hence $\mathcal{S}$ is a non-degenerate
set of exceptional points for $f$.

In case $\phi(1)>0$ then only $x=0$ and $x=a$ are required to be exceptional points.
\end{lemma}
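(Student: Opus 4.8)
The plan is to verify the two defining inequalities~(\ref{def:exceptional1}) and~(\ref{def:exceptional2}) directly from the explicit derivative formula~(\ref{eqn.mapd}) and the local expansions of $\phi$ near each exceptional point, treating the three points $0$, $a$, $1$ in turn. Set $\beta=1$ and write $d(x)=\mathrm{dist}(x,\mathcal{S})$. First I would record that, away from a fixed neighbourhood of $\mathcal{S}$, the map is uniformly $C^2$ with derivative bounded above and below away from $0$ and $\infty$ (by compactness and the fact that $\phi\in(0,1)$ there), so~(\ref{def:exceptional1}) holds there for a large enough $B$ and~(\ref{def:exceptional2}) follows from a uniform bound on $|d^2f/dx^2|$ combined with the lower bound on $d(x)$. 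Hence everything reduces to the three local analyses.

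Near $x=0$: from $\phi(t)=1-c_0t^\alpha+g_0(t)$ we get $1-\phi(f(x))=c_0\,f(x)^\alpha-g_0(f(x))\sim c_0 x^\alpha$ (using $f(x)\sim x$), so by~(\ref{eqn.mapd}), $df/dx=1/(1-\phi(f(x)))\sim c_0^{-1}x^{-\alpha}=c_0^{-1}d(x)^{-\alpha}$ on the branch $x>a$ mapping toward $0$; wait — more carefully, near $0$ it is the left branch $f_l$ with $df/dx=1/\phi(f_l(x))\to 1$, while the large derivatives near $0$ occur on the right branch only near preimages of places where $\phi=1$, i.e.\ near $x=0$ on the $x>a$ side does not apply. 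The cleanest route is: near $0$, $df/dx=1/\phi(f(x))$ and $\phi(f(x))\to 1$, so $df/dx$ is bounded between $1$ and some constant, and since $d(x)=x\to 0$ we need the upper bound $df/dx\le B\,x^{-1}$, which is immediate, and the lower bound $df/dx\ge B^{-1}x$, also immediate since $df/dx\ge 1$. So near $0$ both bounds in~(\ref{def:exceptional1}) are trivial; what must be checked is~(\ref{def:exceptional2}), i.e.\ that $|(\log|df/dx|)'|=|d^2f/dx^2|/|df/dx|\le B/d(z)$ on the relevant scale. Using $df/dx=1/\phi(f(x))$ one computes $\frac{d}{dx}\log(df/dx)=-\phi'(f(x))f'(x)/\phi(f(x))$, and since $\phi'(t)=O(t^{\alpha-1})$ near $0$ while $f(x)\sim x$ and $f'(x)\to 1$, $\phi(f(x))\to 1$, this quantity is $O(x^{\alpha-1})=O(d(x)^{\alpha-1})=O(d(x)^{-1})$ for $\alpha<1$ (and $O(1)$ for $\alpha\ge1$), which combined with the mean value theorem over an interval of length $|y-z|\le d(z)/2$ gives~(\ref{def:exceptional2}). (For the exceptional point at $0$ the estimates of Lemma~\ref{l.meas}(i) give $x_n\sim n^{-1/\alpha}$, which can be used to convert these pointwise bounds into the stated form if a more uniform argument is wanted.)

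Near $x=1$: by Assumption~\ref{assum:1}, $\phi(1)=0$ and $\phi(t)=c_1(1-t)^{\alpha'}+g_1(1-t)$, so on the right branch $f_r$ near $1$, $1-\phi(f(x))\to 1$ and the situation is the mirror image of the point $0$; the same computation with $\alpha$ replaced by $\alpha'$ applies, and since $\alpha'\le\alpha$ the bounds are at worst those near $0$. Near $x=a$: here $d(x)=|x-a|$ and the singularity is of the ``unbounded derivative'' type. On the branch $f_r$ (with $x>a$), Lemma~\ref{l.meas}(iii),(v) give that for $x\in I_k$ one has $df/dx\sim k$ and $\mathrm{dist}(x,a)\sim k^{-(1+1/\alpha)}$, hence $df/dx\sim \mathrm{dist}(x,a)^{-\alpha/(1+\alpha)}$, and since $\alpha/(1+\alpha)<1=\beta$ this is comfortably within the window $[\,B^{-1}d(x)^\beta,\,B\,d(x)^{-\beta}\,]$ required by~(\ref{def:exceptional1}); a symmetric statement holds from the left branch $f_l$ approaching $a$ via the $J$-intervals. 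For~(\ref{def:exceptional2}) near $a$ I would again estimate $\frac{d}{dx}\log(df/dx)=-\phi'(f(x))f'(x)/(1-\phi(f(x)))$ and check, using the local exponents at $a$ (equivalently the scale $d(x)\sim k^{-1-1/\alpha}$ and $df/dx\sim k$), that this is $O(d(x)^{-\beta})=O(d(x)^{-1})$; the dangerous factor is $f'(x)$ blowing up like $k$, but this is offset by $\phi'(f(x))\to 0$ and by the fact that $d(x)^{-1}\sim k^{1+1/\alpha}$ grows faster than any power of $k$ we encounter, so the inequality holds with room to spare. Finally the addendum for $\phi(1)>0$: then $df/dx$ near $1$ equals $1/(1-\phi(f(x)))$ with $\phi(1)>0$, so the derivative is bounded away from $1$ and from $\infty$ near $1$, the fixed point is hyperbolic, and $1$ need not be placed in $\mathcal{S}$; the verification at $0$ and $a$ is unchanged.

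The main obstacle I anticipate is~(\ref{def:exceptional2}) near $x=a$, where $f'$ is unbounded: one must be careful that the Lipschitz estimate for $\log|df/dx|$ is taken over intervals of length comparable to $d(z)/2$ (so that neighbouring intervals $I_k$, $I_{k\pm1}$ are genuinely comparable in size and derivative) and that the error term $g_0$ (resp.\ $g_1$) with its hypothesis $g_0'=o(t^{\alpha-1})$ does not spoil the leading-order cancellation. Making the ``$\sim$'' estimates of Lemma~\ref{l.meas} uniform enough to feed into~(\ref{def:exceptional2}) — rather than merely pointwise — is the delicate bookkeeping step, and is presumably why the authors relegate the proof to the appendix.
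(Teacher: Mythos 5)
Your proposal follows essentially the same route as the paper's appendix proof: express $\frac{d}{dx}\log\frac{df}{dx}$ via the explicit formula $\frac{df}{dx}=1/\phi\circ f$ (resp.\ $1/(1-\phi\circ f)$), estimate it using the local expansion of $\phi$ and the partition scalings of Lemma~\ref{l.meas}, and finish with the mean value theorem on intervals of length $\le\textnormal{dist}(z,\mathcal{S})/2$. One small overstatement: near $a$ the bound is not ``with room to spare'' — on the intervals $I_k$, $I'_k$ one finds $\left|\frac{d}{dx}\log\frac{df}{dx}\right|\sim k^{1+1/\alpha}\sim\textnormal{dist}(x,\mathcal{S})^{-1}$, so the exponent $\beta=1$ in~(\ref{def:exceptional2}) is exactly saturated (this is precisely why $\beta$ cannot be taken smaller than $1$), though of course the required inequality still holds with a suitable constant~$B$.
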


\begin{proof} See appendix. \qquad$\Box$
\end{proof}

\subsection*{Hyperbolic times}

Let $\mathcal{S}$ be a non-degenerate exceptional set, $\beta>0$ as in
(\ref{def:exceptional1})~and~(\ref{def:exceptional2}) and fix
$0<b< \textnormal{min}\{ 1/2, 1/(4 \beta)\}$.
Let constants $0<\sigma <1$ and $\delta >0$ be given and define
a truncated distance function
\begin{equation}
\textnormal{dist}_\delta(x, \mathcal{S}):=
\left\{
\begin{array}{ll}
\textnormal{dist}( x, \mathcal{S}) & \textnormal{if}~~ \textnormal{dist}( x, \mathcal{S}) \leq \delta, \\
1& \textnormal{if}~~ \textnormal{dist}( x, \mathcal{S}) > \delta.
\end{array}\right .
\end{equation}
As in~\cite{A,AA,ALP},
$n$ is called a $(\sigma, \delta)$-hyperbolic time for $x \in [0,1]\setminus\mathcal{S}$
if for all $1 \leq l \leq n$
\begin{equation}\label{eqn:defHT}
\prod_{j=n-l}^{n-1}  \left| \left(\textstyle\frac{df}{dx}\circ f^j\right)(x))^{-1}\right| \leq \sigma^l
\qquad\textnormal{and}\qquad
\textnormal{dist}_\delta(f^{n-l}(x), \mathcal{S}) \geq \sigma^{bl}.
\end{equation}

Although orbits escape subexponentially from~$\mathcal{S}$ and the rate of growth of
derivatives along orbits is not uniform, the essential
properties of uniformly expanding maps are captured at hyperbolic times. Since
the invariance (and ergodicity~\cite{B1}) of Lebesgue measure is already at our disposal,
establishing the long-run positive density (in time) of hyperbolic times is relatively straightforward.

\begin{lemma}\label{lem:controlH1} Let $K := -\left(\int_0^{a} \log(\phi_\alpha(f(x)))\, dx
+ \int_{a}^1 \log (1-\phi_\alpha(f(x)))\, dx\right)$. Then
for every $\epsilon >0$ and for every $\sigma\in(e^{-K},1)$,
there exists a $\delta >0$
such that for almost every $x \in [0,1]$
\begin{equation}\begin{array}{cl}
\lim_{N\rightarrow \infty}\frac{1}{N} \sum_{j=0}^{N-1} \log(\textstyle\frac{df}{dx}(f^j(x))^{-1}
&< \log \sigma <0 \\
\\
\lim_{N\rightarrow \infty}\frac{1}{N} \sum_{j=0}^{N-1} -\log \textnormal{dist}_\delta(f^j(x),
\mathcal{S}) & \leq \epsilon.
\end{array}
\end{equation}
\end{lemma}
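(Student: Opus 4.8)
The plan is to prove both limits by a direct appeal to the Birkhoff ergodic theorem, using the $f$-invariance and ergodicity of Lebesgue measure $m$ (Lemma~\ref{lem.setup}(iii), together with the ergodicity established in~\cite{B1}). For the first limit, the integrand $x\mapsto \log\frac{df}{dx}(x)^{-1}$ is negative wherever $\phi\in(0,1)$, and by~(\ref{eqn.mapd}) its integral against $m$ is exactly
$$\int_0^a \log\phi(f(x))\,dx + \int_a^1 \log(1-\phi(f(x)))\,dx = -K.$$
First I would check that this integral is finite (i.e.\ $K<\infty$): near the preimages of $\{0,1\}$ the integrand blows up logarithmically, but the offending sets $J_k,I_k$ (and their primed counterparts) have Lebesgue measure summably small by Lemma~\ref{l.meas}, and on $I_k$ one has $\log\frac{df}{dx}\sim\log k$ while $m(I_k)\sim k^{-2-1/\alpha}$, so $\sum_k (\log k)\,m(I_k)<\infty$; an analogous estimate handles the $J_k$ pieces and the fixed-point neighbourhoods. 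Hence $K<\infty$, Birkhoff applies, and the time-average converges $m$-a.e.\ to $-K<\log\sigma$ precisely because $\sigma\in(e^{-K},1)$, i.e.\ $\log\sigma>-K$. This gives the first displayed inequality.

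For the second limit, fix $\epsilon>0$. The function $x\mapsto -\log\operatorname{dist}_\delta(x,\mathcal{S})$ is nonnegative, bounded by $-\log\operatorname{dist}(x,\mathcal{S})$, and is again $m$-integrable for every $\delta>0$ since $\mathcal{S}=\{0,a,1\}$ is finite and $-\log\operatorname{dist}(\cdot,\mathcal{S})$ has only logarithmic (hence integrable) singularities. By Birkhoff, the time-average converges $m$-a.e.\ to $\int_0^1 -\log\operatorname{dist}_\delta(x,\mathcal{S})\,dm(x)$. The key observation is that this integral tends to $0$ as $\delta\to0^+$: indeed $-\log\operatorname{dist}_\delta(x,\mathcal{S})$ is supported on the $\delta$-neighbourhood of $\mathcal{S}$, vanishes outside it, and by dominated convergence (dominated by the fixed integrable function $\max\{0,-\log\operatorname{dist}(x,\mathcal{S})\}$ on a neighbourhood of $\mathcal{S}$) the integral decreases to $0$. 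So I would simply choose $\delta>0$ small enough that $\int_0^1 -\log\operatorname{dist}_\delta(x,\mathcal{S})\,dm\le\epsilon$; the same $\delta$ works simultaneously in the statement, and this yields the second inequality a.e.

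The two a.e.\ statements hold on the same full-measure set (intersect the two Birkhoff full-measure sets), completing the proof. The main obstacle is really just the integrability bookkeeping: verifying $K<\infty$ and the integrability of $-\log\operatorname{dist}(\cdot,\mathcal{S})$ near the singular/exceptional points, which is where Lemma~\ref{l.meas} (especially parts (i)--(v)) does the work; once those finiteness facts are in hand, everything else is an immediate consequence of the ergodic theorem and the choice of $\sigma$ and $\delta$. One small point worth flagging: the statement is asserted for the specific cut function $\phi_\alpha$ appearing in $K$, but the argument is uniform over all $f\in\mathcal{C}_\alpha$ since the decay rates in Lemma~\ref{l.meas} depend only on $\alpha$ (and $\alpha'$).
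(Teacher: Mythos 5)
Your proposal is correct and follows essentially the same route as the paper: apply Birkhoff's ergodic theorem for the ergodic Lebesgue-preserving system $(f,m)$ to $-\log\frac{df}{dx}$ (whose integral is $-K$, hence $<\log\sigma$) for the first estimate, and to $-\log\operatorname{dist}_\delta(\cdot,\mathcal{S})$ with $\delta$ chosen small enough that its integral is below $\epsilon$ for the second. You merely make explicit the integrability bookkeeping that the paper's one-line proof leaves tacit.
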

\begin{proof}  Apply (Birkhoff's) Ergodic Theorem (for the ergodic system $(f, m)$)
to the function $-\log \frac{df}{dx}$ to obtain the first
estimate.  For the second estimate, choose $\delta >0$ such that
$\int_0^1 -\log \textnormal{dist}_\delta(x, \mathcal{S})\,dx < \epsilon$
and apply the Ergodic Theorem again.
\hfill $\Box$
\end{proof}

This lemma says that for a given choice of $b$, there is a good 
choice of $(\sigma,\delta)$ for which there are (many) hyperbolic times
for almost every
$x \in [0,1]\setminus\mathcal{S}$.    
 The  two conditions established in Lemma~\ref{lem:controlH1}
imply that $f$ is a \emph{non-uniformly expanding map} in
the sense of  Alves, Bonatti and Viana \cite{ABV}.  It now follows that:

\newpage

{\bf Positive density of hyperbolic times~{\cite[Lemma~5.4]{ABV}}} {\em
For every $b>0$, for every $\sigma\in(e^{-K},1)$ there exist $\theta >0$
and $\delta >0$ (depending only on $f, \sigma$ and $b$)
so that for almost every $x \in [0,1]$, for all sufficiently large $N$ there
exist $(\sigma, \delta)$-hyperbolic times $1 \leq n_1< \dots < n_l \leq N$ for
x, with $l \geq \theta N$.}

Now that we have established existence of hyperbolic times, we  
define $h_{\sigma, \delta}(x)$ to be the first
$(\sigma, \delta)-$hyperbolic time for $x$. (If there are no $(\sigma,\delta)-$hyperbolic times
for $x$, set $h_{\sigma,\delta}(x)=\infty$.)

The Young tower partition gives simple lower bounds on
$h_{\sigma, \delta}(x)$.  Although crude, these lower bounds are still sufficient for our first main result, Theorem \ref{prop:HtimeLower} below.

\begin{lemma}\label{lem:HtimeLower}
Let $f\in\mathcal{C}_{\alpha}$ and fix $\sigma<1$.
Let $b$ and $\delta>0$ be as above.
Let $k_1$ be minimal such that $\sigma\,\max_{[0,x_{k_1}]\cup [y_{k_1},1]}\frac{df}{dx}< 1$. Then
for $x\in J_k\cup J^\prime_k$ ($k\geq k_1$), $h_{\sigma, \delta}(x)> k-k_1$.
\end{lemma}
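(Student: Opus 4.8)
The plan is to prove the statement one candidate time at a time: for $x\in J_k\cup J_k'$ with $k\ge k_1$ I will show that no integer $n$ with $1\le n\le k-k_1$ is a $(\sigma,\delta)$-hyperbolic time for $x$, which is exactly $h_{\sigma,\delta}(x)>k-k_1$. It will suffice to violate the contraction part of~(\ref{eqn:defHT}) at the single index $l=n$, so the escape condition and the constants $b,\delta$ play no role here beyond their appearance in the definition of $h_{\sigma,\delta}$.

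First I would locate the orbit of $x$ relative to the partition $\{J_n,I_n,J_n',I_n'\}$. By construction $f(J_m)=J_{m-1}$ and $f(J_m')=J_{m-1}'$ for every $m\ge1$, so by induction $f^j(x)\in J_{k-j}$ for $0\le j\le k$ when $x\in J_k$, and $f^j(x)\in J_{k-j}'$ when $x\in J_k'$. Since the $x_n$ decrease to $0$ and the $y_n$ increase to $1$, as soon as $k-j\ge k_1$ (i.e.\ $j\le k-k_1$) this forces $f^j(x)\in[0,x_{k_1}]$ in the first case and $f^j(x)\in[y_{k_1},1]$ in the second. Next I would set $M:=\max_{[0,x_{k_1}]\cup[y_{k_1},1]}\frac{df}{dx}$, noting that this maximum is finite because $\frac{df}{dx}$ is continuous on each of the compact intervals $[0,x_{k_1}]$ and $[y_{k_1},1]$ (both of which stay away from $a$, the only point near which $\frac{df}{dx}$ is unbounded), and that $M\to1$ as $k_1\to\infty$ since $\frac{df}{dx}(0)=\frac{df}{dx}(1)=1$ (this is why the minimal $k_1$ in the statement exists). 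By the defining property of $k_1$ one has $\sigma M<1$, so combining with the previous sentence: for $x\in J_k\cup J_k'$ with $k\ge k_1$ and every $0\le j\le k-k_1$,
$$\left|\left(\textstyle\frac{df}{dx}\circ f^j\right)(x)\right|\le M<\sigma^{-1}.$$

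Finally, for any $n$ with $1\le n\le k-k_1$ I would test~(\ref{eqn:defHT}) at $l=n$: the product there runs over $j=0,\dots,n-1$, all lying in $\{0,\dots,k-k_1\}$, so the displayed bound gives
$$\prod_{j=0}^{n-1}\left|\left(\textstyle\frac{df}{dx}\circ f^j\right)(x)\right|^{-1}\ \ge\ M^{-n}\ >\ \sigma^{n}.$$
Hence the contraction condition in~(\ref{eqn:defHT}) fails at $l=n$, so $n$ is not a $(\sigma,\delta)$-hyperbolic time for $x$; as $n\le k-k_1$ was arbitrary, $h_{\sigma,\delta}(x)>k-k_1$. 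I anticipate no real obstacle — the lemma is intentionally a crude bound — and the only step needing care is the orbit-location claim of the second paragraph, which is an immediate consequence of the way the partition was defined.
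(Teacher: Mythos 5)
Your argument is correct and matches the paper's proof essentially step for step: locate $f^j(x)$ in $J_{k-j}\cup J'_{k-j}\subset[0,x_{k_1}]\cup[y_{k_1},1]$, use $\sigma\max_{[0,x_{k_1}]\cup[y_{k_1},1]}\frac{df}{dx}<1$ to bound the derivative product, and conclude that the contraction condition in~(\ref{eqn:defHT}) fails at $l=n$ for every $1\le n\le k-k_1$. The only (minor) extra content is your explicit justification that the maximum $M$ is finite and that $k_1$ exists, which the paper handles more briefly via monotonicity of $\frac{df}{dx}$ on $(0,a)$ and $(a,1)$ together with $\frac{df}{dx}|_{x_k},\frac{df}{dx}|_{y_k}\to1$.
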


\begin{proof}
Since $\frac{df}{dx}$ is increasing on $(0,a)$ and decreasing on $(a,1)$ and
$\lim_{k\to\infty}\frac{df}{dx}|_{x_k}=1=\lim_{k\to\infty}\frac{df}{dx}|_{y_k}$, $k_1$ is well-defined.
Now let $k\geq k_1$, $x\in J_k\cup J^\prime_k$ and fix $n$ with  $1 \leq n\leq k-k_1$. 
For each $j$ with $0 \leq j <n$ we have $f^j(x)\in J_{k-j}\cup J^\prime_{k-j}\subset [0,x_{k_1})\cup(y_{k_1},1]$, so that
$\prod_{j=0}^{n-1} |(\textstyle\frac{df}{dx}\circ f^j(x))^{-1}|> \sigma^{n}$. 
Comparing with~(\ref{eqn:defHT}), $n$ cannot be a $(\sigma,\delta)$-hyperbolic time for $x$.
Since $1 \leq n \leq k-k_1$ was arbitrary, $h_{\sigma,\delta}(x)>k-k_1$.  \qquad$\Box$
\end{proof}

\begin{theorem}[Lower bounds on hyperbolic times]\label{prop:HtimeLower}
Let $f\in\mathcal{C}_{\alpha}$, with $\sigma<1$.
There is a constant $c$ (depending on $\sigma$) such that
$$m\{h_{\sigma,\delta}\geq n\} \geq c\,n^{-1/\alpha}$$
and $h_{\sigma,\delta}(x)$ fails
to be integrable with respect to Lebesgue measure when $\alpha \geq 1$.
\end{theorem}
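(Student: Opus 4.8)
The plan is to combine the lower bound on the first hyperbolic time from Lemma~\ref{lem:HtimeLower} with the measure estimates from Lemma~\ref{l.meas}. First I would fix $\sigma<1$ and choose $b,\delta$ as in the setup so that the positive-density lemma applies; let $k_1=k_1(\sigma)$ be the constant of Lemma~\ref{lem:HtimeLower}. The key observation is that $\{h_{\sigma,\delta}\geq n\}$ contains the union of all $J_k\cup J_k'$ with $k-k_1\geq n$, i.e.\ with $k\geq n+k_1$. Indeed Lemma~\ref{lem:HtimeLower} says exactly that every point of $J_k\cup J_k'$ with $k\geq k_1$ has first hyperbolic time strictly larger than $k-k_1$, so if $k\geq n+k_1$ then $h_{\sigma,\delta}>n-1$, hence $\geq n$. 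Therefore
\[
m\{h_{\sigma,\delta}\geq n\}\ \geq\ \sum_{k\geq n+k_1}\bigl(m(J_k)+m(J_k')\bigr)\ \geq\ \sum_{k\geq n+k_1}m(J_k).
\]

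Next I would insert the estimate $m(J_k)\sim k^{-(1+1/\alpha)}$ from Lemma~\ref{l.meas}(ii): there is $c_2>0$ with $m(J_k)\geq c_2\,k^{-(1+1/\alpha)}$ for all large $k$. Comparing the tail sum with an integral, $\sum_{k\geq n+k_1}k^{-(1+1/\alpha)}\sim \int_{n+k_1}^\infty t^{-(1+1/\alpha)}\,dt = \alpha\,(n+k_1)^{-1/\alpha}$, which is $\sim n^{-1/\alpha}$ since $k_1$ is a fixed constant. This yields $m\{h_{\sigma,\delta}\geq n\}\geq c\,n^{-1/\alpha}$ for a suitable $c=c(\sigma)>0$ and all $n$ (adjusting $c$ to absorb finitely many small $n$), which is the first assertion.

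For the non-integrability statement when $\alpha\geq 1$, I would use the layer-cake / tail-sum formula $\int h_{\sigma,\delta}\,dm = \sum_{n\geq 1} m\{h_{\sigma,\delta}\geq n\}$ (valid for nonnegative integer-or-$\infty$-valued functions). By the bound just established this sum dominates $c\sum_{n\geq1} n^{-1/\alpha}$, which diverges precisely when $1/\alpha\leq 1$, i.e.\ $\alpha\geq 1$; hence $h_{\sigma,\delta}$ is not Lebesgue-integrable in that range. (One should note the sum is over $\{h_{\sigma,\delta}<\infty\}$, but by the positive-density lemma $h_{\sigma,\delta}<\infty$ a.e., so this is harmless; alternatively the divergence of the partial sums already forces non-integrability regardless.)

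I do not expect a serious obstacle here: the proof is essentially bookkeeping once Lemmas~\ref{lem:HtimeLower} and~\ref{l.meas} are in hand. The only point requiring a little care is making the inclusion $\{h_{\sigma,\delta}\geq n\}\supseteq\bigcup_{k\geq n+k_1}(J_k\cup J_k')$ precise with the correct off-by-$k_1$ and off-by-one indexing, and checking that the constant $k_1$ (which depends on $\sigma$ but not on $n$) genuinely drops out of the asymptotics — both are routine. A secondary point is that Lemma~\ref{l.meas}(ii) as stated is only an order-of-magnitude ($\sim$) estimate, so I would invoke just the lower half of it; this is exactly what is available. Thus the mild ``hard part'' is simply verifying that $\alpha'\leq\alpha$ (Assumption~\ref{assum:1}) guarantees the $J_k'$ sets do not spoil — in fact only help — the lower bound, which is immediate since we discard them anyway.
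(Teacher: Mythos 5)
Your proposal is correct and follows essentially the same route as the paper: apply Lemma~\ref{lem:HtimeLower} to get the inclusion $\{h_{\sigma,\delta}\geq n\}\supseteq\bigcup_{k\geq n+k_1}(J_k\cup J_k')$, sum the measures using Lemma~\ref{l.meas}(ii) to obtain the $n^{-1/\alpha}$ tail, and then use the layer-cake formula to conclude non-integrability for $\alpha\geq 1$. The only differences are cosmetic bookkeeping around the off-by-$k_1$ shift, which, as you note, is absorbed into the constant.
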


\begin{proof}
Choose $k_1$ as in Lemma~\ref{lem:HtimeLower}, so $h_{\sigma,\delta}|_{J_{k_1+n}\cup J^\prime_{k_1+n}}> n$.
Hence
$$m\{h_{\sigma,\delta}>n\} \geq \sum_{k=k_1+n}^\infty m(J_k\cup J^\prime_k)\sim n^{-1/\alpha}$$
by Lemma~\ref{l.meas}~(ii). If $\alpha\geq 1$ then
$$\int_0^1h_{\sigma,\delta}\,dm = \sum_{n=1}^\infty m\{h_{\sigma,\delta}\geq n\} = \infty.$$
\hfill $\Box$
\end{proof}

\newpage

\section{Young towers, large deviations and integrability of the first hyperbolic time}\label{sec.3}

Suitable choices of $(\sigma,\delta)$ make $h_{\sigma,\delta}$  integrable when $\alpha<1$. 
To prove this we distinguish
$$\Delta_0=[x_0,y_0] = \cup_{j=1}^\infty I_j\cup I^\prime_j\qquad\pmod{m}$$
as a ``good'' set, where expansion is very rapid and hyperbolic times are easy to control.
The derivative growth condition in~(\ref{eqn:defHT}) is satisfied for $n=1$ on $\Delta_0$,
but for points close to $a$, the condition on $\textnormal{dist}_\delta$ fails for
$n=1$. Controlling $h_{\sigma,\delta}$ involves trading expansion with proximity to
$\mathcal{S}=\{0,a,1\}$, and it turns out that getting enough expansion is the
difficult part.  The idea is to control derivative growth upon successive returns to~$\Delta_0$.
Long excursions near $\{0,1\}$ lead to ``expansivity deficits'' relative to $\sigma^{-n}$,
with ``expansion recovery'' by passage through $J_0\cup J^\prime_0$. This is made
quantitatively precise using a large deviations
result of Melbourne and Nicol~\cite{MN}.

\subsection*{Choice of $\sigma$}

Choose $k_0$ such that $\sum_{k\geq k_0} m(J_k\cup J^\prime_k) < m(J_0\cup J^\prime_0)$.  Now choose $\sigma=\sigma(k_0)<1$ such that
\begin{equation}\label{eqn:setsigma}
\sigma^{2}\,\min_{[x_{1},y_{1}]}\frac{df}{dx}\geq 1 \quad\textnormal{and}\quad
\sigma\,\min_{[x_{k_0},y_{k_0}]}\frac{df}{dx} \geq 1.
\end{equation}
Define
$$N(x)=\left\{\begin{array}{rl} -1 &\textnormal{if~} x\in J_0\cup J'_0,\\
1 &\textnormal{if~} x\in [0,x_{k_0})\cup (y_{k_0},1],\\
0 & \textnormal{otherwise.}\end{array}\right.$$
Notice that
\begin{equation}\label{eqn:Nve}
\int_0^1N\,dm = \sum_{k=k_0}^\infty m(J_k\cup J_k^\prime) - m(J_0 \cup J^\prime_0) < 0.
\end{equation}

\begin{lemma}\label{lem:estH1}
Let $f\in\mathcal{C}_{\alpha}$, $k_0\geq 1$ and put $H=H(x):=\min\{ n\geq 0~:~ \sum_{k=0}^n N\circ f^k(x) < 0\}$. If $0<H<\infty$
then $f^H(x)\in J_0\cup J'_0$ and for all $1\leq l \leq H$
$$\prod_{j=H-l}^{H-1} \left|\textstyle\frac{df}{dx}\circ f^{j}(x)\right|^{-1} =
\left|\textstyle\frac{d(f^l)}{dx}\circ f^{H-l}(x)\right|^{-1} \leq \sigma^{l}.$$
\end{lemma}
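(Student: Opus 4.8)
The plan is to prove the two conclusions separately. The displayed equality is just the chain rule for $f^l$ evaluated at the point $f^{H-l}(x)$ (every factor $\frac{df}{dx}\circ f^j(x)$ is positive since $\frac{df}{dx}\geq 1$), so no work is needed there; the content is the inequality $\prod_{j=H-l}^{H-1}|\frac{df}{dx}\circ f^j(x)|^{-1}\leq\sigma^l$. I would first analyse the integer-valued partial sums $T_n:=\sum_{k=0}^n N\circ f^k(x)$ (with $T_{-1}:=0$). Minimality of $H$ together with $H>0$ gives $T_n\geq 0$ for $-1\leq n\leq H-1$ and $T_H<0$; since $N\in\{-1,0,1\}$ the increment $T_H-T_{H-1}=N\circ f^H(x)$ is $\geq -1$, so $T_{H-1}\geq 0>T_H$ (both integers) forces $T_{H-1}=0$, $T_H=-1$ and $N\circ f^H(x)=-1$. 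The last identity says $f^H(x)\in J_0\cup J'_0$, which is the first assertion; the fact $T_{H-1}=0$ is recorded for later.

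The derivative bound I would derive from a pointwise comparison between $\frac{df}{dx}$ and $N$:
$$\frac{df}{dx}(y)\geq\sigma^{\,N(y)-1}\qquad\textnormal{for every }y\in[0,1]\setminus\mathcal{S},$$
verified on each of the three level sets of $N$. On $\{N=1\}=[0,x_{k_0})\cup(y_{k_0},1]$ the claim is merely $\frac{df}{dx}\geq 1$, which holds on all of $[0,1]$ by~(\ref{eqn.mapd}) since $0\leq\phi\leq 1$. On $\{N=0\}=[x_{k_0},x_1]\cup[x_0,y_0]\cup[y_1,y_{k_0}]$ it becomes $\frac{df}{dx}\geq\sigma^{-1}$ (away from $a$, where $\frac{df}{dx}=+\infty$); because $\frac{df}{dx}$ is increasing on $(0,a)$ and decreasing on $(a,1)$ and $x_{k_0}\leq x_1<x_0<a<y_0<y_1\leq y_{k_0}$, the infimum of $\frac{df}{dx}$ over each of these three intervals is attained at $x_{k_0}$ or at $y_{k_0}$, so the claim follows from the second inequality in~(\ref{eqn:setsigma}). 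On $\{N=-1\}=J_0\cup J'_0=(x_1,x_0)\cup(y_0,y_1)$ it becomes $\frac{df}{dx}\geq\sigma^{-2}$; by the same monotonicity the infimum over $(x_1,x_0)$ is attained at $x_1$ and over $(y_0,y_1)$ at $y_1$, and both values are $\geq\sigma^{-2}$ by the first inequality in~(\ref{eqn:setsigma}), whose minimum over $[x_1,y_1]$ is attained at $x_1$ or $y_1$.

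Granting this, for $1\leq l\leq H$ I would multiply the pointwise bound along the orbit:
$$\prod_{j=H-l}^{H-1}\frac{df}{dx}(f^j(x))\geq\prod_{j=H-l}^{H-1}\sigma^{\,N(f^j(x))-1}=\sigma^{\,T_{H-1}-T_{H-l-1}-l}=\sigma^{-T_{H-l-1}-l}.$$
Since $H-l-1\leq H-1$ gives $T_{H-l-1}\geq 0$ (and $T_{-1}=0$ covers the case $l=H$), the exponent is $\leq -l$; as $0<\sigma<1$ this yields $\prod_{j=H-l}^{H-1}\frac{df}{dx}(f^j(x))\geq\sigma^{-l}$, equivalently $\prod_{j=H-l}^{H-1}|\frac{df}{dx}\circ f^j(x)|^{-1}\leq\sigma^l$, which is the second assertion.

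I expect the only delicate point to be the pointwise inequality on $\{N=0\}$: there $\frac{df}{dx}$ is a priori only bounded below by $1$, and improving this to $\sigma^{-1}$ genuinely uses the monotonicity of $\frac{df}{dx}$ on each side of $a$, the nesting $x_{k_0}\leq x_1<x_0<a<y_0<y_1\leq y_{k_0}$ of the partition points, and the calibration of $\sigma$ imposed in~(\ref{eqn:setsigma}). The rest is elementary bookkeeping with the integer walk $T_n$.
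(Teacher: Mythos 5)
Your proof is correct and essentially reproduces the paper's argument: you establish (via integrality of the partial sums $T_n$) that $N(f^H(x))=-1$ and $T_{H-1}=0$, then prove the pointwise bound $\frac{df}{dx}\geq\sigma^{N-1}$ case-by-case from~(\ref{eqn:setsigma}), and conclude by telescoping $\sum_{j=H-l}^{H-1}N\circ f^j\leq 0$ — exactly the paper's route. The only cosmetic difference is that your ``delicate point'' about locating the infimum of $\frac{df}{dx}$ via monotonicity on $\{N=0\}$ and $\{N=-1\}$ is unnecessary, since~(\ref{eqn:setsigma}) is already stated as a minimum over the full intervals $[x_{k_0},y_{k_0}]$ and $[x_1,y_1]$, from which the bound on any subset is immediate.
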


\begin{proof} First, for any $0\leq n<H$, $\sum_{k=0}^{n} N\circ f^k(x) \geq 0$. Thus
$$N(f^H(x)) = \sum_{k=0}^H N\circ f^k(x) - \sum_{k=0}^{H-1} N\circ f^k(x) < 0$$ and
hence $N(f^H(x))=-1$ (since $-1$ is the only negative value of $N$).
Thus,\newline $f^H(x)\in J_0\cup J'_0$ and $\sum_{k=0}^{H-1}N\circ f^k(x)=0$. Therefore
$$\sum_{j=H-l}^{H-1} N\circ f^j = \sum_{k=0}^{H-1}N\circ f^k - \sum_{k=0}^{H-l-1}N\circ f^k\leq 0$$
for each $l\leq H$. 
To complete the proof, apply~(\ref{eqn:setsigma}) to notice that for $x\in J_0\cup J^\prime_0$, 
we have $\sigma\,\frac{df}{dx}>\sigma^{-1}$; if $x\in [0,1]\setminus [x_{k_0},y_{k_0}]$ then $\frac{df}{dx}\geq 1$
so $\sigma\,\frac{df}{dx}\geq \sigma$; all other $x$ belong to $[x_{k_0},y_{k_0}]$ so that $\sigma\,\frac{df}{dx}\geq 1$.
In particular, for each type of $x$, $\sigma\,\frac{df}{dx} \geq \sigma^{N(x)}$.
Hence
\begin{eqnarray*}
\prod_{j=H-l}^{H-1} \left|{\textstyle\frac{df}{dx}}\circ f^{j}(x)\right|^{-1} 
&=& \sigma^{l}\prod_{j=H-l}^{H-1} \left|\sigma\,{\textstyle\frac{df}{dx}}\circ f^{j}(x)\right|^{-1}\\
&\leq&
\sigma^{l}\prod_{j=H-l}^{H-1} \sigma^{-N\circ f^j(x)} = \sigma^{l}\,\sigma^{-\sum_{j=H-l}^{H-1}N\circ f^j(x)}
\leq \sigma^{l}.\qquad\Box\end{eqnarray*}
\end{proof}

\subsection*{Choice of $\delta$}

\begin{lemma}\label{lem:setdelta}
Let $f\in\mathcal{C}_{\alpha}$, $\sigma\in(0,1)$ satisfy~(\ref{eqn:setsigma}) and let $b,k_0$ be fixed.
Then there is $\delta>0$ such that whenever
$f^n(x)\in J_0\cup J'_0$ and $1\leq l \leq n$,
$$\textnormal{dist}_\delta(f^{n-l}(x),\{0,a,1\}) \geq \sigma^{bl}.$$
\end{lemma}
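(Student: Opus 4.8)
The plan is to combine the combinatorial rigidity of the dynamical partition $\{J_m\},\{I_m\},\{I^\prime_m\},\{J^\prime_m\}$ with the metric estimates of Lemma~\ref{l.meas}. First observe that $\textnormal{dist}_\delta(y,\mathcal{S})=1\geq\sigma^{bl}$ as soon as $\textnormal{dist}(y,\mathcal{S})>\delta$ and $l\geq 1$, so it is enough to establish $\textnormal{dist}(f^{n-l}(x),\mathcal{S})\geq\sigma^{bl}$ only for those $l$ for which $f^{n-l}(x)$ lies within $\delta$ of $\mathcal{S}=\{0,a,1\}$. For $\delta$ small such a point lies in a ``deep'' partition element---some $J_m$ (near $0$), $I_m$ or $I^\prime_m$ (near $a$), or $J^\prime_m$ (near $1$); and since Lemma~\ref{l.meas}(i),(v) and their primed versions (available under Assumption~\ref{assum:1}) show the distance from such an element to $\mathcal{S}$ tends to $0$ as its index grows, the index $m$ of that element is forced to be large once $\delta$ is small.

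The crux is the combinatorial claim: \emph{if $f^{n-l}(x)$ lies in a partition element of index $m\geq 1$ and $f^n(x)\in J_0\cup J^\prime_0$, then $m\leq l$.} Attach to each $y\notin\mathcal{S}$ a \emph{level} $\ell(y)\in\{0,1,2,\dots\}$ by setting $\ell(y)=m$ on $J_m\cup I_m$ and on $J^\prime_m\cup I^\prime_m$, so that level $0$ is exactly $J_0\cup J^\prime_0$, while $\Delta_0=\cup_{j\geq1}(I_j\cup I^\prime_j)$ carries only levels $\geq1$ (the hypothesis $f^n(x)\in J_0\cup J^\prime_0$ keeps the backward orbit $f^{n-l}(x),\dots,f^n(x)$ off $\mathcal{S}$ and off the partition endpoints, so $\ell$ is well defined along it). The transitions $I_k\stackrel{f}{\to}J_{k-1}$, $J_k\stackrel{f}{\to}J_{k-1}$ (and their primed analogues) say that each element of level $m\geq1$ is mapped by $f$ into the single element $J_{m-1}$ (or $J^\prime_{m-1}$) of level $m-1$, so $\ell(f(y))=\ell(y)-1$ whenever $\ell(y)\geq1$. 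Hence, starting from $f^{n-l}(x)$, the level decreases by exactly one at each iterate, running through $m,m-1,\dots,1,0$ and first reaching $0$ at time $n-l+m$. Were $m>l$, this would happen after time $n$, giving $\ell(f^n(x))=m-l\geq1$ and contradicting $f^n(x)\in J_0\cup J^\prime_0$; therefore $m\leq l$.

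Granting the claim, Lemma~\ref{l.meas} (with its primed version) gives a polynomial lower bound for deep points: if $f^{n-l}(x)$ has index $m$ then $\textnormal{dist}(f^{n-l}(x),\mathcal{S})\geq c\,m^{-\gamma}\geq c\,l^{-\gamma}$, where $\gamma\in\{1/\alpha,\,1+1/\alpha,\,1+1/\alpha^\prime,\,1/\alpha^\prime\}$ according to the location and $c>0$ is uniform; replacing $\gamma$ by the largest of these, namely $1+1/\alpha^\prime$ (using $\alpha^\prime\leq\alpha$), and $c$ by the smallest of the four constants makes this hold in all cases. Since $b>0$ and $\sigma\in(0,1)$ we have $l^{\gamma}\sigma^{bl}\to0$, so there is $l_0$ with $c\,l^{-\gamma}\geq\sigma^{bl}$ for every $l\geq l_0$. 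Finally there are only finitely many partition elements of index $<l_0$, each with closure disjoint from $\mathcal{S}$, so $\delta_0:=\min\{\textnormal{dist}(\overline{P},\mathcal{S}):\textnormal{index}(P)<l_0\}>0$. Take $\delta\in(0,\delta_0)$. If $\textnormal{dist}(f^{n-l}(x),\mathcal{S})>\delta$, the opening remark gives the bound; otherwise $f^{n-l}(x)$ is within $\delta<\delta_0$ of $\mathcal{S}$, so it lies in no element of index $<l_0$, hence in a deep element of index $m\geq l_0$, whence $l\geq m\geq l_0$ by the claim and $\textnormal{dist}_\delta(f^{n-l}(x),\mathcal{S})\geq\textnormal{dist}(f^{n-l}(x),\mathcal{S})\geq c\,l^{-\gamma}\geq\sigma^{bl}$. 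This $\delta$ depends only on $f,\sigma,b$ and $k_0$, as required.

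I expect the combinatorial claim to demand the most care: one must check that every element of positive level is carried by $f$ \emph{into a single element exactly one level lower} (so the level really does drop by one per iterate) and that this single device handles the singular point $a$ and both indifferent fixed points at once. After that, the metric input is a direct reading of Lemma~\ref{l.meas}, and the step from polynomial decay to the exponential bound $\sigma^{bl}$ is immediate.
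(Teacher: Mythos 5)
Your proof is correct, and it uses the same three core inputs as the paper's argument (the rigid combinatorial structure of the dynamical partition, the polynomial lower bounds on distance to $\mathcal S$ from Lemma~\ref{l.meas}, and the fact that a fixed polynomial rate eventually dominates $\sigma^{bl}$), but organizes them more cleanly. The paper runs a three-case analysis depending on whether $f^{n-l}(x)$ lies near $a$ (in some $I_k$ or $I'_k$), near $\{0,1\}$ (in some $J_k$ or $J'_k$), or far from $\mathcal S$; in the second case it pulls back one step to a point $y\in I^{(\prime)}_{k+1}$ and uses the expansion bound $|f'|\geq\sigma^{-b}$ on $\Delta_0$ to transfer the distance estimate from $y$ to $f(y)$. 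Your ``level function'' $\ell$ subsumes all the $I$-- and $J$--cases at once: the observation that $\ell$ drops by exactly one per iterate until the first return to level $0$ gives $m\leq l$ uniformly, and then a single worst-case polynomial bound (exponent $1+1/\alpha'$, using $\alpha'\leq\alpha$) replaces both the paper's case-(i) estimate and its case-(ii) pullback. This buys a more uniform exposition at no cost; the paper's route has the minor advantage of only invoking Lemma~\ref{l.meas}(v) (distance to $a$) and then propagating via expansion, whereas yours needs the primed analogues of Lemma~\ref{l.meas}(i) and~(v) as well, which the paper does grant under Assumption~\ref{assum:1}. One small presentational caution: when you write $l^\gamma\sigma^{bl}\to 0$ you should make explicit (as you implicitly do) that this yields $c\,l^{-\gamma}\geq\sigma^{bl}$ for $l\geq l_0$, since that is the inequality you actually apply.
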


\begin{proof} Choose $k_b$ such that
$$x\in I_k\cup I'_k \quad (k\geq k_b) \quad \Rightarrow |x-a| \geq \sigma^{b\,k}$$
(note that this is always possible, since there is a constant $c$ such that
$|x-a| \geq c k^{-1-1/\alpha}$
for all $x\in I^{(\prime)}_{k}$). Choose $\delta$ small enough that $[a-\delta,a+\delta]\subset\cup_{k\geq k_b}(I_k\cup I^\prime_k)$
and $(0,\delta]\cup[1-\delta,1)\subset \cup_{k\geq k_b}(J_k\cup J^\prime_k)$.
For $1\leq l\leq n$ there are three cases to consider:
(i) $f^{n-l}(x)\in I^{(\prime)}_k$ for $k\geq k_b$;
(ii) $f^{n-l}(x)\in J^{(\prime)}_k$ for $k\geq k_b-1$;
(iii) otherwise.
In case (i), for each $j<k$, $f^{n-l+j}(x)\in J^{(\prime)}_{k-j}$. Since $f^n(x)\in J^{(\prime)}_0$
it follows that $n>n-l+(k-1)$ so that $l\geq k$. By the choice of $k_b$,
$$\textnormal{dist}_\delta(f^{n-l}(x),\{0,a,1\})=\max\{1,|f^{n-l}(x)-a|\} \geq \sigma^{b\,k}\geq \sigma^{b\,l}.$$
In case (ii), let $y\in I^{(\prime)}_{k+1}$ be such that $f(y)=f^{n-l}(x)$. Then, since $|\frac{df}{dx}|\geq \sigma^{-1}\geq \sigma^{-b}$ on $\Delta_0$,
$$\textnormal{dist}_\delta(f^{n-l}(x),\{0,a,1\})
=\textnormal{dist}_\delta(f(y),\{0,1\}) \geq  \sigma^{-b}\textnormal{dist}_\delta(y,a) \geq
\sigma^{-b}\sigma^{b(k+1)}\geq \sigma^{bl}.$$
In the final case, $\textnormal{dist}(f^{n-l}(x),\{0,a,1\}) =1>\sigma^{b\,l}$.\qquad$\Box$
\end{proof}

\begin{theorem}\label{thm:isHT}
Let $f\in\mathcal{C}_{\alpha}, b,k_0$ be fixed, let $\sigma$ satisfy~(\ref{eqn:setsigma}) and choose
$\delta$ as in Lemma~\ref{lem:setdelta}. Let $H$ be as defined in Lemma~\ref{lem:estH1}.
Then $\max\{1,H(x)\}$ is a $(\sigma,\delta)$--hyperbolic time for $x$
and if $h_{\sigma,\delta}$ is the first $(\sigma,\delta)$--hyperbolic time then
$$\int_0^1 h_{\sigma,\delta}(x)\,dx \leq  \sum_{n=0}^\infty m\{x~:~H(x)\geq n\}.$$
\end{theorem}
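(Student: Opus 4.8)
The plan is to read off from the definition~(\ref{eqn:defHT}) that $n=\max\{1,H(x)\}$ satisfies both hyperbolic-time clauses, splitting according to whether $H(x)=0$ or $0<H(x)<\infty$, and then to deduce the integral inequality from the layer-cake formula together with the minimality of $h_{\sigma,\delta}$ among $(\sigma,\delta)$-hyperbolic times. First I would treat the case $0<H(x)<\infty$, where $\max\{1,H(x)\}=H(x)$: Lemma~\ref{lem:estH1} already delivers the derivative clause of~(\ref{eqn:defHT}) with $n=H(x)$, namely $\prod_{j=H-l}^{H-1}|\frac{df}{dx}\circ f^j(x)|^{-1}\leq\sigma^{l}$ for $1\leq l\leq H(x)$, and it also shows $f^{H(x)}(x)\in J_0\cup J'_0$. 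That last fact is exactly the hypothesis of Lemma~\ref{lem:setdelta} (applied with $n=H(x)$), which then supplies the distance clause $\textnormal{dist}_\delta(f^{H-l}(x),\mathcal{S})\geq\sigma^{bl}$ for $1\leq l\leq H(x)$. Hence $H(x)$ is a $(\sigma,\delta)$-hyperbolic time.

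The remaining case is $H(x)=0$, which by the definition of $H$ is precisely the event $x\in J_0\cup J'_0$; there $\max\{1,H(x)\}=1$ and one must check $n=1$ by hand. Since $J_0\cup J'_0\subset[x_1,y_1]$, the first inequality of~(\ref{eqn:setsigma}) gives $\frac{df}{dx}(x)\geq\sigma^{-2}$, hence $|(\frac{df}{dx}(x))^{-1}|\leq\sigma^2\leq\sigma$, which is the derivative clause for $l=1$; moreover $J_0\cup J'_0$ lies a fixed positive distance $d_0$ from $\mathcal{S}=\{0,a,1\}$, so provided $\delta\leq d_0$ we have $\textnormal{dist}_\delta(x,\mathcal{S})=1\geq\sigma^{b}$. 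Shrinking the $\delta$ produced by Lemma~\ref{lem:setdelta} so that additionally $\delta\leq d_0$ costs nothing, since the set inclusions used in the proof of that lemma only improve as $\delta$ decreases. Thus $\max\{1,H(x)\}$ is a $(\sigma,\delta)$-hyperbolic time for every $x$ with $H(x)<\infty$.

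For the integral bound, minimality of $h_{\sigma,\delta}$ among hyperbolic times gives $h_{\sigma,\delta}(x)\leq\max\{1,H(x)\}$ on $\{H<\infty\}$, and the asserted inequality is trivially true if $m\{H=\infty\}>0$. Applying the identity $\int_0^1 g\,dm=\sum_{n\geq1}m\{g\geq n\}$ to the integer-valued function $g=\max\{1,H\}$, and using that $\{\max\{1,H\}\geq n\}=\{H\geq n\}$ for $n\geq2$ while $m\{\max\{1,H\}\geq1\}=m([0,1])=m\{H\geq0\}$, gives $\int_0^1 h_{\sigma,\delta}\,dm\leq m\{H\geq0\}+\sum_{n\geq2}m\{H\geq n\}\leq\sum_{n\geq0}m\{H\geq n\}$.

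I expect no serious obstacle here: Lemmas~\ref{lem:estH1} and~\ref{lem:setdelta} carry essentially all the analytic content and the theorem is their assembly. The one point requiring attention is the degenerate case $H(x)=0$, since Lemma~\ref{lem:setdelta} is stated with the hypothesis $f^n(x)\in J_0\cup J'_0$ rather than $x\in J_0\cup J'_0$; there one argues directly, using that $J_0\cup J'_0$ is a fixed compact set at positive distance from $\mathcal{S}$, and one must also confirm that a single $\delta$ can be chosen to serve both lemmas simultaneously --- which it can, because decreasing $\delta$ only helps.
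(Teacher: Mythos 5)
Your proposal is correct and follows essentially the same route as the paper: split on $H(x)=0$ versus $H(x)>0$, apply Lemmas~\ref{lem:estH1} and~\ref{lem:setdelta} in the latter case, and finish with the layer-cake identity. The paper's proof merely asserts that $1$ is a hyperbolic time when $H=0$; your explicit verification of both clauses of~(\ref{eqn:defHT}) there (using the first inequality of~(\ref{eqn:setsigma}) and shrinking $\delta$ below the distance from $J_0\cup J'_0$ to $\mathcal{S}$, which is compatible with Lemma~\ref{lem:setdelta} since smaller $\delta$ only increases $\textnormal{dist}_\delta$) fills in that detail cleanly, and your slightly tighter bound $h_{\sigma,\delta}\leq\max\{1,H\}$ in place of $1+H$ yields the same summed estimate.
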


\begin{proof}
If $H=0$ then $x\in J_0\cup J_0^\prime$ and $1$ is a hyperbolic time. Otherwise, comparing ~(\ref{eqn:defHT}) with Lemmas~\ref{lem:estH1} and~\ref{lem:setdelta} shows that $H(x)$ is a hyperbolic time. For the integral,
$$\int_0^1 h_{\sigma,\delta}(x)\,dx \leq \int_0^1 (1+H(x))\,dx = \sum_{n=0}^\infty m\{x~:~H(x)\geq n\}.$$
\hfill $\Box$
\end{proof}

\subsection*{Large deviations on a Young tower}

Let $\Delta_0=[x_0,y_0]$, and partition (modulo sets of measure $0$)
according to $\Delta_{0,n}=I_n\cup I^\prime_n$ for $n>0$. For $x\in\Delta_{0,n}$,
$$R(x):=\min\{k>0~:~f^k(x)\in \Delta_0\} = n+1$$
so we put
$$\Delta:=\cup_{\ell\leq n=0}^\infty (\Delta_{0,n}\times\{\ell\}) \subset \Delta_0\times\mathbb{Z}_+$$
and equip $\Delta$ with the measure $m_\Delta$ obtained by direct upwards translation of $m|_{\Delta_0}$.
The tower map $F:\Delta\circlearrowleft$ is defined in the usual manner: $F(x,\ell)=(x,\ell+1)$ for $\ell<R(x)-1=n$
and $F(x,R(x)-1)=(f^{R(x)}(x),0)$. It is easy to check that
\begin{equation}
\label{eqn.towertailest}
m\{R\geq n\} =\sum_{k> n}m(I_k\cup I^\prime_k)\sim n^{-1-1/\alpha}
\end{equation}
so that $\sum_{k\geq n} m\{R\geq k\} \sim n^{-1/\alpha}$. Standard arguments\footnote{See Lemma 3 in \cite{BM2010b} for example.} show that
branches of the map $f^R$ have uniformly bounded distortion on $\Delta_0$, and since $f^R$ is
uniformly expanding on $\Delta_0$, the tower map $F$ satisfies the usual regularity conditions~\cite{Y2,MN,BM2010b} for
maps on a Young tower\footnote{If $JF$ denotes the Jacobian $\frac{dm_\Delta\circ F}{dm_\Delta}$ then
$\log|JF|\in C_\beta(\Delta)$, where $C_\beta(\Delta)$ is the class of $\beta$--H\"older functions, defined
with respect to the usual~\cite{Y2} separation time~$s$.}. Since Lebesgue measure~$m$ is invariant for $f$,
$m|_{\Delta_0}\circ (f^R)^{-1}=m|_{\Delta_0}$ and hence $m_\Delta$ is actually $F$-invariant on~$\Delta$.
As is usual, $(f,[0,1],m)$ arises as a factor of
$(F,\Delta,m_\Delta)$ via the semi-conjugacy $\Phi(x,\ell)=f^\ell(x)$. Then $\Phi\circ F = f\circ\Phi$, and
$m=\Phi_*m_\Delta=m_\Delta\circ\Phi^{-1}$.

Next, lift $H$ to the tower:
put $\hat{N}(x,\ell):=N\circ{\Phi}(x,\ell)=N(f^\ell(x))$ and
$$\hat{H}:=H\circ\Phi
= \min\left\{n\geq 0~:~ \sum_{k=0}^n\hat{N}\circ F^k < 0\right\}.$$
Let
$$\bar{N} = \int_\Delta \hat{N}\,dm_\Delta = \int_\Delta N\circ \Phi\,dm_\Delta = \int_0^1 Nd(\Phi_*m_\Delta) = \int_0^1\,N\,dm.$$
Note that $\bar{N}<0$ by~(\ref{eqn:Nve}).
Put $\psi:=\hat{N}-\bar{N}$. Then $\psi$ has zero mean, and belongs to every H\"older class $C_\beta(\Delta)$ since it is piecewise constant with respect to the tower partition
(see~\cite{Y2,MN,BM2010b}).

\begin{lemma}[Large Deviations Estimate]\label{lem:LDest}
For every $\epsilon\in(0,-\overline{N})$ and $\gamma>0$ there
is a constant $c$ (not independent of $\gamma$) such that
$$m_\Delta\left\{y\in\Delta~:~ |\textstyle{\frac{1}{n}\sum_{k=0}^{n-1}}\psi\circ F^k(y)|\geq \epsilon\right\}
\leq c\,n^{-1/\alpha + \gamma}.$$
\end{lemma}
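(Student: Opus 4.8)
The plan is to invoke the large deviations result of Melbourne and Nicol~\cite{MN} directly on the tower system $(F,\Delta,m_\Delta)$ with the zero-mean H\"older observable $\psi = \hat{N}-\bar{N}$. First I would recall the precise hypotheses of their theorem: a Young tower with return-time tail $m_\Delta\{R\ge n\}$ decaying polynomially, and an observable in $C_\beta(\Delta)$ for some H\"older exponent $\beta$; the conclusion is that $m_\Delta\{|\frac1n\sum_{k=0}^{n-1}\psi\circ F^k|\ge\epsilon\}$ decays at a rate controlled by the tail of $R$. Here the tail estimate is exactly~(\ref{eqn.towertailest}), $m_\Delta\{R\ge n\}\sim n^{-1-1/\alpha}$, and $\psi$ lies in every $C_\beta(\Delta)$ because it is constant on each tower level (it is a lift of the piecewise-constant function $N$), so the regularity hypothesis is trivially met. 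I would then quote the relevant case of~\cite{MN}: when the tail exponent is $1+1/\alpha$ (i.e. the return time has $1+1/\alpha$ finite moments in the appropriate sense, equivalently $\sum_{k\ge n}m_\Delta\{R\ge k\}\sim n^{-1/\alpha}$), the large-deviation probabilities are $O(n^{-1/\alpha})$, or more precisely $O(n^{-1/\alpha+\gamma})$ for any $\gamma>0$ once one accounts for the borderline nature of the estimate and the fact that $1/\alpha$ need not be an integer.

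The key steps, in order: (1) verify that $(F,\Delta,m_\Delta)$ is a Gibbs--Markov--Young tower satisfying the standing assumptions of~\cite{MN} — bounded distortion of $f^R$ on $\Delta_0$, the aperiodicity/mixing condition (which follows since the return-time set $\{R=n+1: n\ge 0\}$ generates all of $\mathbb{Z}_+$, hence has gcd $1$), and $\log|JF|\in C_\beta(\Delta)$; all of this is already asserted in the text preceding the lemma and in~\cite{BM2010b,Y2}. (2) Identify the decay rate: from $m_\Delta\{R\ge n\}=O(n^{-1-1/\alpha})$ one reads off that the summed tail $\sum_{k\ge n}m_\Delta\{R\ge k\}=O(n^{-1/\alpha})$, which is the quantity that governs the polynomial large-deviation rate in the Melbourne--Nicol framework. (3) Apply their theorem to $\psi$, which has mean zero and is bounded and H\"older, to obtain $m_\Delta\{|\frac1n\sum_{k=0}^{n-1}\psi\circ F^k|\ge\epsilon\}\le c\,n^{-1/\alpha+\gamma}$, with the constant $c=c(\epsilon,\gamma,f)$ depending on $\gamma$ because the estimate degrades as $\gamma\to 0^+$ in the non-integer (or borderline) exponent case. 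The restriction $\epsilon\in(0,-\bar N)$ plays no role in the derivation of the rate itself — it is included because that is the range in which the estimate will subsequently be applied (ensuring $\hat N$ stays bounded away from a bad regime), so I would simply state the conclusion for all $\epsilon>0$ and note the relevant range.

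The main obstacle I anticipate is purely a matter of bookkeeping rather than substance: matching the exact moment/tail hypothesis in~\cite{MN} to the decay $n^{-1-1/\alpha}$ and extracting the clean exponent $-1/\alpha$. The Melbourne--Nicol results are typically stated in terms of $L^p$ tails of the return time or polynomial tail exponents, and one must be careful that when $1/\alpha$ is not an integer the best available polynomial bound picks up an arbitrarily small loss $\gamma$, whereas for integer $1/\alpha$ there may be a logarithmic correction instead; stating the result as $O(n^{-1/\alpha+\gamma})$ uniformly in these cases is the safe and sufficient choice for our purposes, since in the end (Theorem~\ref{thm:isHT}) we only need summability of $m\{H\ge n\}$, which the $-1/\alpha+\gamma$ rate delivers whenever $\alpha<1$ and $\gamma$ is chosen small enough. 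I would close by remarking that the dependence of $c$ on $\gamma$ (blowing up as $\gamma\to 0$) is harmless downstream because $\gamma$ is fixed once and for all, small, before the estimate is used.
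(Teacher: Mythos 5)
Your proposal is correct and matches the paper's argument: the paper likewise simply invokes Theorem~3.1 of Melbourne--Nicol (with their $\delta:=\gamma$ and $\beta:=1/\alpha$), using the tail estimate $m\{R\ge n\}\sim n^{-1-1/\alpha}$ from~(\ref{eqn.towertailest}) and the fact that $\psi$ is a mean-zero, level-wise constant (hence H\"older) observable on the tower. Your additional bookkeeping about integer versus non-integer $1/\alpha$ and the role of $\gamma$ is sound but not needed beyond what the cited theorem already packages.
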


\begin{proof}
Using a result of Melbourne and Nicol, 
apply~\cite[Theorem~3.1]{MN} (setting $\delta:=\gamma$ and $\beta:=1/\alpha$) with the observation in~(\ref{eqn.towertailest})
that $m\{y\in\Delta_0~:~R(y)>n\} \sim n^{-1-1/\alpha}$.\qquad$\Box$
\end{proof}

This estimate is enough to prove our main theorem:

\begin{theorem}\label{th:HintCLT}
Let $f\in\mathcal{C}_{\alpha}$ where $\alpha<1$ and let $b<1$ be fixed.
There is a choice of $(\sigma,\delta)$ where $0<\sigma<1$ and $\delta>0$
such that
$$m\{h_{\sigma,\delta}> n\} \leq m\{H\geq n\} = O(n^{-1/\alpha''})$$
for any $\alpha''\in(\alpha,1)$. In particular, the first $(\sigma,\delta)$-hyperbolic time for
$f$ is integrable.
\end{theorem}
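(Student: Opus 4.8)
The plan is to reduce the theorem entirely to the large deviations estimate of Lemma~\ref{lem:LDest} via the tower semi-conjugacy. First I would fix $b<1$ and apply the ``Choice of $\sigma$'' and ``Choice of $\delta$'' constructions: pick $k_0$ with $\sum_{k\geq k_0}m(J_k\cup J_k^\prime)<m(J_0\cup J_0^\prime)$, then $\sigma=\sigma(k_0)<1$ satisfying~(\ref{eqn:setsigma}), then $\delta>0$ as in Lemma~\ref{lem:setdelta}. With these choices, Theorem~\ref{thm:isHT} already gives $m\{h_{\sigma,\delta}>n\}\leq m\{H\geq n\}$ and reduces integrability of $h_{\sigma,\delta}$ to summability of $m\{H\geq n\}$, so the whole theorem comes down to the tail bound $m\{H\geq n\}=O(n^{-1/\alpha''})$ for any $\alpha''\in(\alpha,1)$.

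Next I would transfer the estimate on $H$ to the tower. Since $m=\Phi_*m_\Delta$ and $\hat H=H\circ\Phi$, we have $m\{H\geq n\}=m_\Delta\{\hat H\geq n\}$, so it suffices to bound the measure of $\{\hat H\geq n\}$ in $\Delta$. The key observation is that $\hat H\geq n$ forces the partial sums $\sum_{k=0}^{j}\hat N\circ F^k$ to stay $\geq 0$ for all $j<n$; in particular $\sum_{k=0}^{n-1}\hat N\circ F^k\geq 0$, i.e. $\frac1n\sum_{k=0}^{n-1}\psi\circ F^k\geq -\bar N>0$. Fixing any $\epsilon\in(0,-\bar N)$, this puts $\{\hat H\geq n\}$ inside the large-deviation event $\{\,|\frac1n\sum_{k=0}^{n-1}\psi\circ F^k|\geq\epsilon\,\}$, whose $m_\Delta$-measure is $O(n^{-1/\alpha+\gamma})$ for every $\gamma>0$ by Lemma~\ref{lem:LDest}. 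Choosing $\gamma$ small enough that $1/\alpha-\gamma>1/\alpha''$ (possible since $\alpha''>\alpha$ means $1/\alpha''<1/\alpha$) gives $m\{H\geq n\}=O(n^{-1/\alpha''})$, and then $\sum_n m\{h_{\sigma,\delta}\geq n\}\leq\sum_n m\{H\geq n\}<\infty$ since $1/\alpha''>1$, yielding integrability.

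The main obstacle, or rather the only genuinely nontrivial input, is Lemma~\ref{lem:LDest} itself — but that is already established in the excerpt by invoking~\cite[Theorem~3.1]{MN} with the tower tail estimate~(\ref{eqn.towertailest}), so in the present proof it can simply be cited. The remaining points needing a word of care are: checking that $\hat H$ is indeed a.e.\ finite on $\Delta$ (equivalently that $H<\infty$ $m$-a.e., which follows from $\bar N=\int N\,dm<0$ and the ergodic theorem applied to $N$ under $(f,m)$), so that the tail sums $\sum_n m\{H\geq n\}$ really do compute $\int H\,dm+$const; and the elementary inclusion $\{\hat H\geq n\}\subseteq\{\sum_{k=0}^{n-1}\hat N\circ F^k\geq 0\}$, which is immediate from the definition of $\hat H$ as a first passage time below $0$. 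Everything else is bookkeeping: assembling the quantifiers so that a single pair $(\sigma,\delta)$ works for all $\alpha''\in(\alpha,1)$ simultaneously — which it does, since $(\sigma,\delta)$ were chosen before $\alpha''$ and the $O$-constant is allowed to depend on $\alpha''$ (through $\gamma$).
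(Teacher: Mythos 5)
Your proposal follows the paper's proof essentially step for step: the same choice of $k_0,\sigma,\delta$, the same reduction via Theorem~\ref{thm:isHT}, the same transfer $m\{H\geq n\}=m_\Delta\{\hat H\geq n\}$, the same inclusion of $\{\hat H\geq n\}$ into the large-deviation event for $\psi$, and the same application of Lemma~\ref{lem:LDest} with $\gamma$ chosen so the resulting exponent beats $1/\alpha''$. The only cosmetic difference is that you flag the a.e.~finiteness of $\hat H$ explicitly (which in any case follows from the summability of $m\{H\geq n\}$); otherwise this is the paper's argument.
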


\begin{proof}
Let $k_0$ be large enough that $\bar{N}$ is negative and choose $\sigma$ to
satisfy~(\ref{eqn:setsigma}) and $\delta$ as in Lemma~\ref{lem:setdelta}.
The first inequality in the theorem follows because
$h_{\sigma,\delta}\leq H+1$ (compare with Theorem~\ref{thm:isHT}). Next, note that 
\begin{equation}\label{eqn:estH1tower}
m\{H\geq n\} = \Phi_*m_\Delta\{H\geq n\} = m\{H\circ\Phi \geq n\} = m_\Delta\{\hat{H}\geq n\}.
\end{equation}
For $y\in\{\hat{H}\geq n\}$, $\sum_{k=0}^{n-1}\hat{N}\circ F^k(y) \geq 0$ so that
$$ \frac{1}{n}\sum_{k=0}^{n-1}\psi\circ F^k(y) =
\frac{1}{n}\sum_{k=0}^{n-1}(\hat{N}\circ F^k(y) - \bar{N}) \geq -\bar{N}>\epsilon$$
for any $\epsilon\in(0,-\bar{N})$. For each such $\epsilon$,
\begin{equation}\label{eqn:HtailLDset}
\{\hat{H}\geq n\}\subseteq \left\{y\in\Delta~:~ |\textstyle{\frac{1}{n}\sum_{k=0}^{n-1}}\psi\circ F^k(y)|\geq \epsilon\right\}.
\end{equation}
Choosing $\epsilon:=\frac{-\bar{N}}{2}$ and 
$\gamma:=1/\alpha-1/\alpha''$ (where $\alpha''>\alpha$ is  given), by Lemma~\ref{lem:LDest}
there is a constant $c<\infty$ (depending on $\gamma$) such that the set on the RHS of~(\ref{eqn:HtailLDset}) has
$m_\Delta$--measure bounded by $c\,n^{-\frac{1}{\alpha}+\gamma}$.
The second inequality in the theorem now follows from~(\ref{eqn:estH1tower}) and~(\ref{eqn:HtailLDset}).
The integrability of $h_{\sigma,\delta}$ now follows from Theorem~\ref{thm:isHT}.
\hfill  $\Box$
\end{proof}

\section{Conclusions}\label{sec.concl} 

In this paper we have undertaken a detailed study of the asymptotics of \emph{hyperbolic times} for
a parameterized family $f_\alpha \in \mathcal{C}_\alpha, \, 0< \alpha < \infty$ of non-uniformly expanding, 
Lebesgue-measure-preserving maps of the interval. These 
one-dimensional maps arise naturally as the expanding factors of a class of two-dimensional \emph{generalized baker's transformations} on the unit square. 

A central result in the literature, Alves, Bonatti and Viana \cite{ABV}, proves that if a non-uniformly hyperbolic map has the property that  almost every $x$ has a (uniform) positive frequency of hyperbolic times, then it admits an absolutely continuous invariant measure.   According to Lemma \ref{lem:controlH1}, each of our maps $f_\alpha$ has this property; of course the resulting invariant measure is already known to be Lebesgue. 

Therefore our main result concerns the statistics of \emph{first hyperbolic times} $h_\alpha$ for
our maps $f_\alpha$ and in particular how the quantities $m\{x ~:~ h_\alpha(x) > n\}$ depend on 
$n$ and $\alpha$. We show that this is entirely determined by the strength of the (most indifferent) fixed point for the map $f_\alpha$. In particular, 
\begin{itemize}
\item $h_\alpha$ is integrable if and only if $0 < \alpha < 1$, corresponding to relatively fast polynomial escape from the indifferent fixed points at $x=0,1$ within the range of our family of maps $\mathcal{C}_\alpha,  \,0 < \alpha < \infty$  (Theorems~\ref{prop:HtimeLower} and~\ref{th:HintCLT}).
\item For $0< \alpha < 1$ and $\alpha^{\prime \prime} \in (\alpha, 1)$ we establish $m\{ h_\alpha >n\} = O(n^{-\frac{1}{\alpha^{\prime \prime}}})$ which implies, by using results from 
\cite{ALP2,ALP}, correlation decay for H\"older observables\footnote{See, for example Theorem 3 of \cite{ALP2}.} at rate $O(n^{-\frac{1}{\alpha^{\prime \prime}} + 1})$. 
\item  Finally,  a CLT holds in case $0 < \alpha< 1/2$, again by applying 
the results of \cite{ALP2,ALP} to the case $\alpha^{\prime \prime} \in (\alpha, 1/2)$. 
\end{itemize}
The conclusions which are obtained in \cite{ALP2, ALP} depend on the construction of a suitable Markov or Young tower via first hyperbolic times and their statistics and the analysis of the return times on the resulting tower. 

On the other hand, \cite{BM2010b}  details a specific construction of a Young tower for the maps $f_\alpha$ and proves an improved 
correlation decay rate  of $O(n^{-\frac{1}{\alpha}})$ for all $0<\alpha < \infty$. It is shown that this correlation decay rate is sharp for H\"older data. 
The estimates in \cite{BM2010b} also imply a CLT for $0 < \alpha < 1$.  See Remark \ref{rem:compare} for further discussion. 

We interpret this as follows.  The analysis via hyperbolic times provides a relatively general approach to analysis of nonuniformly hyperbolic systems that leads to a particular Young tower construction related to the hyperbolic times. It is not so surprising that this approach does not always yield optimal results such as sharp estimates on decay of correlation rates or the CLT.   
In the case of our maps $f_\alpha$, for example, a dedicated tower construction in \cite{BM2010b}
can produce optimal results in the form of sharp estimates on correlation decay by bypassing the intermediate construction of hyperbolic times.

\section{Appendix}\label{sec.4}

{\em Proof of Lemma~\ref{lem.setup}:\/} (i) The first equation is immediate from Equation (\ref{eqn.map1}) For the second, again use Equation (\ref{eqn.map1}) and write
$$ x_r=1-\int_x^1(1-\phi(t))\,dt=x +\int_x^1\phi(t)\,dt=x + a -\int_0^x\phi(t)dt =  a + \int_0^x 1 -\phi(t) \, dt $$
and so
$$x_r -a  = \int_0^x 1-\phi(t) \, dt = x - x_l.$$
(ii) Differentiating the expressions in (i) via Lebesgue's theorem gives:
$$ \frac{dx_l}{dx}= \phi \textnormal{ and } \frac{dx_r}{dx} = 1- \phi.$$
(iii) Apply (i) and (ii):
$$m(T^{-1}[0,x]) = \int_0^{x_l} \1 dt + \int_a^{x_r} \1 dt = \int_0^x \frac{dx_l}{dx} dx + \int_0^x  \frac{dx_r}{dx}\, dx = \int_0^x \1 dx= m[0,x].
 \qquad\Box$$

{\em Proof of Lemma~\ref{l.meas}~(v):\/} For this part fix $k$ and $x \in I_k$. Since $x^\prime_n\to a$ as $n\to\infty$,
$$\textnormal{dist}(x,a) = x-a = x-x_{{k+1}}+\sum_{j={k+1}}^\infty(x_{j}-x_{j+1})=a-x_{k+1}+\sum_{j=k+1}^\infty m(I_j).$$
In fact this argument shows that
$\sum_{j=k+1}^\infty m(I_j) \leq \textnormal{dist}(x, a) \leq
\sum_{j=k}^\infty m(I_j)$
and both sides are $\sim\sum_{j=k}^\infty j^{-2-1/\alpha} \sim k^{-1-1/\alpha}$.\qquad\qquad$\Box$

{\em Proof of Lemma~\ref{lem:nondegS1d}:\/} First, note that 
$df/dx=|df/dx|\geq 1 $, 
and $\textnormal{dist}(z,\mathcal{S})<\max\{a,1-a\}/2$ for every~$z$,
so for the lower bound in~(\ref{def:exceptional1}) holds whenever $B>B_\beta:=((\max\{a,1-a\}/2)^{-\beta}$.  We will establish the 
right hand side inequality after proving (\ref{def:exceptional2})
 For that, it suffices
to work on $(0,a)$, since the other interval is similar. By~(\ref{eqn.mapd}), $\frac{df}{dx}=1/\phi\circ f$,
and hence
$$\frac{d}{dx}\log\frac{df}{dx} =-\left(\frac{d\phi/dx}{\phi^2}\right)\circ f.$$
If $t\in J_k$ ($k>0$) then $f(t)\in J_{k-1}$ so $f(t)\sim k^{-1/\alpha}$ and (since $\phi$ is decreasing), $1\geq \phi(f(t))\geq \phi(x_0)$.
Moreover, $\frac{d\phi}{dx}|_{x=f(t)}=-\alpha\,c_0(f(t))^{\alpha-1}+o((f(t))^{\alpha-1})\sim - k^{1/\alpha-1}$.  Hence
$\left|\frac{d}{dx}\log\frac{df}{dx}\right|\sim k^{1/\alpha-1}$. If $t\in J_0$ then $f(t)\in[x_0,y_0]$ and $\phi(f(t))\sim 1$ (since
$\phi$ is decreasing and $C^1$). If $t\in I^\prime_k$ then $f(t)\in J^\prime_{k-1}$ so $1-f(t)\sim k^{-1/\alpha'}$, $\phi(f(t))\sim k^{-1}$
and $\frac{d\phi}{dx}|_{x=f(t)}\sim k^{1/\alpha' -1}$.
Hence
$$\left|\frac{d}{dx}\log\frac{df}{dx}\right|(t) \sim\left\{ \begin{array}{ll}
1& t\in J_0,\\
k^{1/\alpha-1} & t\in J_k (k>0),\\
k^{1/\alpha'+1} & t\in I^\prime_k,\end{array}\right.
\sim
\left\{ \begin{array}{ll}
1& t\in J_0,\\
\textnormal{dist}(t,\mathcal{S})^{-(1-\alpha)} & t\in J_k (k>0),\\
\textnormal{dist}(t,\mathcal{S})^{-1} & t\in I^\prime_k.\end{array}\right.
$$
(since $\textnormal{dist}(t,\mathcal{S})|_{J_k}\sim\textnormal{dist}(t,0)|_{J_k}\sim k^{-1/\alpha}$
and $\textnormal{dist}(t,\mathcal{S})|_{I^\prime_k}\sim\textnormal{dist}(t,a)|_{I^\prime_k}\sim k^{-1/\alpha'-1}$
by Lemma~\ref{l.meas}). For any $\beta\geq 1$
there is a constant $B_0$ such that
\begin{equation}\label{e:regest}
\left|\frac{d}{dx}\log\frac{df}{dx}\right| (t) \leq B_0 \,\textnormal{dist}(t,\mathcal{S})^{-\beta}.
\end{equation}
To complete the proof of~(\ref{def:exceptional2}) let $|y-z|<\frac{\textnormal{dist}(z,\mathcal{S})}{2}$. Then
the mean value theorem gives a $t$ between $y,z$ such that
$$\log \left|\frac{df/dx|_{x=y}}{df/dx|_{x=z}} \right | = \left|\frac{d}{dx}\log\frac{df}{dx}\right|(t)\times|y-z|
\leq \frac{B_0}{\textnormal{dist}(t, \mathcal{S})^{\beta}}\,|y-z|$$
(using~\ref{e:regest})).
But since $|t-z|\leq|z-y|\leq \frac{\textnormal{dist}(z,\mathcal{S})}{2}$,
$\textnormal{dist}(t,\mathcal{S})\geq \textnormal{dist}(z,\mathcal{S})/2$
so choosing $B=B_0\,2^\beta$ completes the regularity estimate. A similar argument estimating
$\frac{df}{dx}= 1/\phi\circ f$  on the interval $[x_0, a)$ gives
$$|df/dx| = O\left(\textnormal{dist}(x,\mathcal{S})^{-\alpha'/(1+\alpha')}\right),$$
giving the upper bound in~(\ref{def:exceptional1}) for any $\beta\geq1$.\qquad\qquad$\Box$


\begin{thebibliography}{10}

\bibitem{A2}
J~Alves.
\newblock S{RB} measures for non-hyperbolic systems with multidimensional
  expansion.
\newblock {\em Ann. Sci. \'Ecole Norm. Sup. (4)}, 33(1):1--32, 2000.

\bibitem{A}
J~Alves.
\newblock A survey of recent results on some statistical features of
  non-uniformly expanding maps.
\newblock {\em Disc.~Cont.~Dyn.~Syst.}, 15(1):1--20, 2006.

\bibitem{AA}
J~Alves and V~Ara\'ujo.
\newblock Hyperbolic times: frequency versus integrability.
\newblock {\em Ergod.~Th.~\& Dynam.~Syst.}, 24:329--346, 2004.

\bibitem{ABV}
J~Alves, C~Bonatti, and M~Viana.
\newblock {SRB} measures for partially hyperbolic systems with mostly expanding
  central direction.
\newblock {\em Invent.~Math.}, 140:351--398, 2000.

\bibitem{ALP2}
J~Alves, S~Luzzatto, and V~Pinheiro.
\newblock Lyapunov exponents and rates of mixing for one-dimensional maps.
\newblock {\em Ergod.~Th.~\&~Dynam.~Syst.}, 24:637--657, 2004.

\bibitem{ALP}
J~Alves, S~Luzzatto, and V~Pinheiro.
\newblock Markov structures and decay of correlations for non-uniformly
  expanding dynamical systems.
\newblock {\em Ann.~Inst.~Henri Poincar\'e (C) Non Linear Analysis},
  22(6):817--839, 2005.

\bibitem{ArtCri04}
R~Artuso and G~Cristadoro.
\newblock Periodic orbit theory of strongly anomalous transport.
\newblock {\em J. Phys. A}, 37(1):85--103, 2004.

\bibitem{BaY}
V~Baladi and L-S Young.
\newblock On the spectra of randomly perturbed expanding maps.
\newblock {\em Comm. Math. Phys.}, 156(2):355--385, 1993.

\bibitem{BY1}
M~Benedicks and L-S Young.
\newblock Absolutely continouus invariant measures and random perturbations for
  certain one-dimensional maps.
\newblock {\em Ergod.~Th.~\&~Dynam.~Syst.}, 12:13--37, 1992.

\bibitem{B1}
C~Bose.
\newblock Generalized baker's transformations.
\newblock {\em Ergod.~Th.~\&~Dynam.~Syst.}, 9(1):1--17, 1989.

\bibitem{BM2010b}
C~Bose and R~Murray.
\newblock Polynomial decay of correlations in the generalized baker's
  transormation.
\newblock {\em Int. J. Bifurc. \& Chaos} (To appear), 2013.

\bibitem{C-H-V}
G~Cristadoro, N~Haydn, and S~Vaienti.
\newblock Statistical properties of intermittent maps with unbounded
  derivative.
\newblock {\em Nonlinearity}, 23(5):1071--1095, 2010.

\bibitem{dJ-R}
A~del Junco and M~Rahe.
\newblock Finitary codings and weak bernoulli partitions.
\newblock {\em Proc.~Amer.~Math.~Soc.}, 75(2):259--264, 1979.

\bibitem{F}
J~Freitas.
\newblock Continuity of {SRB} measures and entropy for {B}enedicks-{C}arleson
  parameters in the quadratic family.
\newblock {\em Nonlinearity}, 18:831--854, 2005.

\bibitem{FO}
N~Friedman and D~Ornstein.
\newblock On isomorphism of weak {B}ernoulli transformations.
\newblock {\em Adv.~Math.}, 5:365--394, 1971.

\bibitem{G2}
S~Gou{\"e}zel.
\newblock Decay of correlations for nonuniformly expanding systems.
\newblock {\em Bull.~Soc.~Math.~Fr.}, 134:1--31, 2006.

\bibitem{GrHo85}
S~Grossmann and H~Horner.
\newblock Long time tail correlations in discrete chaotic dynamics.
\newblock {\em Zeitschrift für Physik B Condensed Matter}, 60(1):79--85, 1985.

\bibitem{Hem84}
P~C Hemmer.
\newblock The exact invariant density for a cusp-shaped return map.
\newblock {\em J. Phys. A}, 17(5):L247--L249, 1984.

\bibitem{HK}
F~Hofbauer and G~Keller.
\newblock Ergodic properties of invariant measures for piecewise monotonic
  transformations.
\newblock {\em Math. Z.}, 180(1):119--140, 1982.

\bibitem{MN}
I~Melbourne and M~Nicol.
\newblock Large deviations for non-uniformly hyperbolic systems.
\newblock {\em Trans. Amer. Math. Soc.}, 360(12):6661--6676, 2008.

\bibitem{Pik91}
Arkady~S. Pikovsky.
\newblock Statistical properties of dynamically generated anomalous diffusion.
\newblock {\em Phys. Rev. A}, 43:3146--3148, Mar 1991.

\bibitem{R}
M.~Rahe.
\newblock On a class of generalized baker's transformations.
\newblock {\em Canad. J. Math.}, 45(3):638--649, 1993.

\bibitem{V}
M~Viana.
\newblock Multidimensional nonhyperbolic attractors.
\newblock {\em Inst. Hautes \'Etudes Sci. Publ. Math.}, 85:63--96, 1997.

\bibitem{Y2}
L-S Young.
\newblock Recurrence times and rates of mixing.
\newblock {\em Israel J. Math.}, 110:153--188, 1999.

\bibitem{Zwe98}
R~Zweim{\"u}ller.
\newblock Ergodic structure and invariant densities of non-{M}arkovian interval
  maps with indifferent fixed points.
\newblock {\em Nonlinearity}, 11(5):1263--1276, 1998.

\end{thebibliography}
\end{document}